\documentclass[11pt]{article}
\usepackage{amsfonts,amsmath,tikz}
\usepackage{epsfig}
\usepackage{latexsym}
\usepackage{color}
\usepackage{soul}  
\usepackage{epsf,amssymb}
\usepackage{amsthm}
\usepackage{yfonts}
\usepackage{hyperref}
\usepackage{ifthen}
\usepackage{enumitem}

\usepackage{float} 

\newtheorem{theorem}{Theorem}
\newtheorem{lemma}[theorem]{Lemma}

\newtheorem{corollary}[theorem]{Corollary}
\newtheorem{proposition}[theorem]{Proposition}

\newtheorem{definition}[theorem]{Definition}

\def\vertex(#1){\put(#1){\circle*{2}}}
\def\vertexo(#1){\put(#1){\circle{2}}}
\def\vert(#1){\put(#1){\circle*{1.5}}}
\def\verto(#1){\put(#1){\circle{1.5}}}
\def\lab(#1)#2{\put(#1){\makebox(0,0)[c]{#2}}}
\definecolor{darkgreen}{RGB}{0,100,0}

\newcommand{\epn}{\mbox{epn}}

\newcommand{\gt}{\gamma_t}

\newcommand{\gammakL}{\gamma_k^{\mathrm L}}
\newcommand{\gammaenaL}{\gamma_1^{\mathrm L}}
\newcommand{\diam}{\mathrm{diam}}

\usepackage{todonotes}

\tikzset{My Style/.style={draw, circle, fill=black,scale=0.5}} 

\tikzset{My Style2/.style={draw, circle, fill=white,scale=0.5}} 

\title{On $k$-limited domination in graphs}

\author{
Dragana Bo\v zovi\'{c} \\
\small \it University of Maribor, FEECS, Koro\v ska cesta 46, 2000 Maribor, Slovenia \\
\small \it University of Maribor, FNM, Koro\v ska cesta 160, 2000, Maribor, Slovenia
\small \tt dragana.bozovic@um.si\\
\and
Gordana Radi\'{c} \\
\small \it University of Maribor, FEECS, Koro\v ska cesta 46, 2000 Maribor, Slovenia \\
\small \it IMFM, Jadranska ulica 19, 1000 Ljubljana, Slovenia \\
\small \tt gordana.radic@um.si
\and 
\v{Z}ana Kovijani\'{c}-Vuki\'{c}evi\'{c} \\
\small \it University of Montenegro, PMF, 81000 Podgorica, Montenegro \\
\small \tt zanak@ucg.ac.me
\and
Aleksandra Tepeh \\
\small \it University of Maribor, FEECS, Koro\v ska cesta 46, 2000 Maribor, Slovenia \\
\small \it IMFM, Jadranska ulica 19, 1000 Ljubljana, Slovenia \\
\small \it University of Maribor, FNM, Koro\v ska cesta 160, 2000, Maribor, Slovenia\\
\small \tt aleksandra.tepeh@um.si \\
}
\date{}

\begin{document}
\maketitle

\begin{abstract}
In this work, we introduce and study the notion of \emph{$k$-limited domination} in graphs, 
motivated by applications where dominating vertices have bounded capacity and cannot be 
overloaded by too many external neighbors. 
Formally, given an integer $k \le \Delta(G)$, a set of vertices $D \subseteq V(G)$ is called 
a $k$-limited dominating set if it is a dominating set and, in addition, each vertex of $D$ 
has at most $k$ neighbors outside $D$. The minimum cardinality of such a set is the 
\emph{$k$-limited domination number}, denoted by $\gammakL(G)$. 
Since $\Delta(G)$-limited domination coincides with the classical domination, we restrict our attention to the nontrivial range $1 \le k < \Delta(G)$, where the degree limitation becomes meaningful and leads to new combinatorial phenomena.

In this paper, we initiate the study of this concept by deriving sharp general bounds for $\gammakL(G)$ and identifying conditions under which these bounds can be further improved. We establish a connection between $k$-limited domination and $(1,t)$-domination. In particular, for $d$-regular graphs we prove that $\gammakL(G)=\gamma_{1,d-k}(G)$. In the special case $k=1$, we show that $1$-limited domination is tightly linked to graph packings, yielding the bound $\gamma^{\mathrm L}_1(G) \le n - \rho(G)$ and its characterization.

The study reveals several natural open questions and indicates that limited domination provides a rich ground for further research.
\end{abstract}

\noindent
{\bf Keywords:} graph theory, domination, $k$-limited domination

\section{Introduction}

Domination is a central concept in graph theory, capturing the idea of placing the minimal number of resources (e.g., guards, transmitters, or facilities) so that every point in the network is either a resource itself or adjacent to one. Formally, a \emph{dominating set} in a graph \(G=(V(G),E(G))\) is a subset \(D\subseteq V(G)\) such that every vertex in \(V(G)-D\) has at least one neighbor in \(D\). The \emph{domination number} \(\gamma(G)\) is the minimum size of such a set.
For a comprehensive treatment of domination concepts, including historical insights into their introduction, examples of applications, and an in-depth discussion of their properties and interrelations, we refer the reader to three excellent monographs \cite{HHH1,HHH3,HHH2}.

For modeling real-world scenarios, several variations of domination have been introduced, like the \textit{total domination} where every vertex including those in the dominating set must be adjacent to a dominator \cite{MA_Henning}. In the \textit{efficient domination}, it is required that each vertex is dominated by exactly one dominator \cite{HHH4, kpy}. In $(1,t)$-domination each non-dominator must be adjacent to at least one dominator, while each dominator must in turn dominate at least $t$
other dominators \cite{fakhran, favaron}.
We postpone the precise definitions of these variants until they are needed later. At this point, we only recall the formal definition of $k$-domination, since our new concept of $k$-limited domination is, in some sense, a dual to $k$-domination.

Let $k$ be a positive integer. A subset $D\subseteq V(G)$ is a \textit{$k$-dominating set} if every vertex $v\in V(G)- D$ has at least $k$ neighbors in $D$, \cite{Fink}. The \textit{$k$-domination number}, $\gamma_k(G)$, is the minimum size of such a set. Note that $\gamma_1(G)=\gamma(G)$.
In this work we introduce and study the notion of $k$-limited domination in graphs, which focuses on the constraint on how many non-dominators a dominator can reach, rather than the reverse.

Given a nonnegative integer $k \le \Delta(G)$, a set of vertices $D$ in a graph $G$ is called a \emph{$k$-limited dominating set} if it is a dominating set and, in addition, each vertex in $D$ has at most $k$ neighbors outside $D$. We denote by $\gammakL(G)$ the minimum cardinality of such a set.

The additional degree constraint in the definition of $k$-limited domination seems natural already from a theoretical perspective, as it incorporates the classical domination as a special case: if $k=\Delta(G)$, the constraint is trivially satisfied for every dominating set, so $\gamma^{\mathrm L}_{\Delta(G)}(G)=\gamma(G)$.
Moreover, its is also natural from a practical point of view: while classical domination aims to cover all vertices with the smallest possible number of vertices (resources), the $k$-limited domination model reflects a limitation that frequently appears in real-world systems, namely, avoiding the overloading of certain resources or vertices.
For instance, one broad class of applications arises in service allocation problems, where certain facilities or providers (e.g., warehouses, distribution centers, hospitals, clinics, veterinary services) serve surrounding demand points.
In classical domination, a single facility could, in principle, cover an unlimited number of demand points.
In practice, however, each facility or provider has capacity limits, such as storage space, available personnel, consultation time, travel distance, or the quantity of goods and services that can be delivered in a given period.
The $k$-limited domination model captures these constraints by ensuring that no facility or provider is responsible for more than $k$ distinct demand points, leading to allocations that are both realistic and implementable.

The aim of our paper is not only to formally introduce this practically relevant concept, but also to initiate a systematic investigation and provide a foundation for its thorough study in the future.
 
\medskip 

The rest of the paper is organized as follows. In the next section, we provide formal definitions of the
concepts under consideration, together with exact formulae for $\gammakL(G)$ of certain graph classes, 
which will later be used also for establishing the tightness of our bounds. These are considered in Section~3, where we first establish general bounds for arbitrary graphs, and demonstrate their tightness by  
focusing on efficient graphs. For such graphs we provide a sufficient condition under which the $k$-limited domination number coincides with the classical domination number. Furthermore, under an additional structural assumption, we derive an alternative upper bound expressed in terms of $\gamma(G)$, which in certain cases improves upon the general upper bound. In Section~4, we bound $k$-limited domination in terms of $(1,t)$-domination. As an interesting consequence, we obtain that if $G$ is a $d$-regular graph and $k<d$, then $\gammakL(G)=\gamma_{1,d-k}(G)$.
This means, for instance, that in cycles the $1$-limited domination number coincides with the total domination number, while in cubic graphs $\gamma_{2}^{\mathrm{L}}(G)$ equals the total domination number, and $\gamma_{1}^{\mathrm{L}}(G)$ coincides with $(1,2)$-domination number.  
In Section~\ref{sec:1L}, we take a closer look at $1$-limited domination. We characterize graphs that attain the general lower bound, as well as the improved upper bound expressed in terms of the packing number of a graph. In particular, we establish a Gallai-type inequality, $\gamma^{\mathrm L}_1(G) + \rho(G) \le n$, and examine the cases when the packing number $\rho(G)$ attains its minimum and maximum possible value.
Finally, Section~6 provides concluding remarks and outlines several directions for future research.

\section{Preliminaries and basic observations}

We begin by recalling standard notation and terminology. Let $G = (V(G),E(G))$ be a finite simple undirected graph with $|V(G)| = n$. For a vertex $v \in V(G)$, we denote by $N(v)$ its open neighborhood and by $N[v] = N(v) \cup \{v\}$ its closed neighborhood. Similarly, $N(S)$ denotes the set of neighbors of vertices of $S$ in $V(G)- S$. The degree of $v\in V(G)$ is denoted by $\deg(v)$, and the minimum and maximum degrees of $G$ by $\delta(G)$ and $\Delta(G)$, respectively.  A vertex of degree $1$ is called a \emph{pendant vertex}, while any vertex that is adjacent to at least one pendant vertex is called a \emph{support vertex}.
The distance between two vertices $u,v\in V(G)$, denoted by $d(u,v)$, is the length of a shortest $u-v$ path in $G$, and the diameter of a connected graph $G$, $\diam(G)$, is the maximum distance between any pair of vertices of $G$. 

Recall that a \textit{dominating set} of a graph $G$ is a subset $D$ of $V(G)$ such that every vertex not in $D$ is adjacent to some vertex in $D$. The \textit{domination number}, $\gamma(G)$, is the minimum cardinality of a dominating set of $G$. A dominating set of $G$ with cardinality equal to $\gamma(G)$ is called a $\gamma(G)$-set.


\begin{definition}
\label{gldef}
Let $G$ be a graph and let $k$ be an integer such that $0\leq k \le \Delta(G)$.  
A set $D \subseteq V(G)$ is called a \emph{$k$-limited dominating set} if $D$ is a dominating set, and
for every $u \in D$ it holds that $|N(u)-D| \le k$.
\end{definition}

The \emph{$k$-limited domination number} of $G$, denoted by $\gammakL(G)$, is the minimum cardinality of a $k$-limited dominating set in $G$. A $k$-limited dominating set of order $\gammakL(G)$ will be referred to as $\gammakL(G)$-set. For convenience, we shall refer to the second condition in  Definition \ref{gldef} as the \emph{$k$-limited constraint}.

Let $D$ be a $k$-limited dominating set of a graph $G$.
Since $|N(u)-D| \le \Delta(G)$ for all $u \in D$, every dominating set is trivially a $\Delta(G)$-limited dominating set, hence $\gamma_{\Delta(G)}^L(G) = \gamma(G)$.
If $k=0$, then every vertex in $D$ has no neighbors outside $D$, which forces $D = V(G)$, and therefore $\gamma_0^{\mathrm L}(G) = |V(G)|$.
Thus, the interesting range for $k$ is $1 \le k \le \Delta(G)-1$ for graphs with $\Delta(G) \ge 2$. 
In addition, if $G$ has connected components $G_1,\dots,G_r$, then
$\gammakL(G) = \sum_{i=1}^r \gammakL(G_i)$. Therefore, in what follows we restrict our attention to connected graphs on at least $3$ vertices.

Note that when $k<\Delta(G)$, the restriction that each dominating vertex has at most $k$ external neighbors may increase the size of a minimum dominating set, so $\gamma(G)\le \gamma^{\mathrm L}_k(G)$, with equality still possible in some graphs.

\medskip

To better understand the newly introduced concept, we now
consider some basic families of graphs and their 
$k$-limited domination numbers (for the relevant values of $k$), where we use the following standard notation: $P_n$, $S_n$ and $K_n$ are the path, the star and the complete graph, respectively, on $n$ vertices, and $K_{m,n}$ denotes the complete bipartite graph with 
partite sets of sizes $m$ and $n$.

\begin{proposition}
\label{families}
Let $k$ be a positive integer. Then the following holds.
\begin{enumerate}[label=(\roman*)]
\item For every $n\geq 3$ we have $\gammaenaL(P_n) = \lceil\frac n2\rceil$.
\item For every $n\geq 4$ and $k$ such that $1\leq k<n$ we have $\gammakL(S_n) = n - k$. 
\item\label{polni} For every $n\geq 3$ and $k$ such that $1\leq k<n$ we have $\gammakL(K_n) = n - k$.

\item For every $m$ and $n$ such that $2\leq m\leq n$ and $1\leq k< n$, we have
$$\gammakL(K_{m,n})=\left\{\begin{array}{ccl}
1+n-k;&m\leq k\\
m+n-2k;&k<m.
\end{array}\right.$$
\end{enumerate}
\end{proposition}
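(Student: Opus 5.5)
All four parts use the same template. For the lower bound we take an arbitrary $k$-limited dominating set $D$ and use the $k$-limited constraint to force many vertices into $D$. For the upper bound we exhibit an explicit set of the claimed size.

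\emph{Path.} One counting argument gives a lower bound valid for every graph. Each vertex of $V(G)-D$ has a neighbour in $D$, and each vertex of $D$ has at most $k$ neighbours outside $D$. Hence $n-|D| = |V(G)-D| \le k|D|$, that is, $|D|\ge n/(k+1)$. For $P_n$ with $k=1$ this gives $\gammaenaL(P_n)\ge\lceil n/2\rceil$.

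For the upper bound, write $P_n=v_1v_2\cdots v_n$ and let $D_0=\{v_i : i\equiv 2,3 \pmod 4\}$. Then $D_0$ consists of adjacent pairs $v_{4j+2}v_{4j+3}$. Each vertex of such a pair has exactly one neighbour outside the pair, and the pairs dominate everything except possibly the last one or two vertices. We then patch the end according to $n \bmod 4$:
\begin{itemize}
\item if $n\equiv 0\pmod 4$, $D_0$ already works;
\item if $n\equiv 1$, add $v_n$;
\item if $n\equiv 2$, add $v_n$;
\item if $n\equiv 3$, $D_0$ already works, since $v_{n-1},v_n\in D_0$.
\end{itemize}
In each case we check that no vertex of $D$ acquires two outside neighbours and that $|D|=\lceil n/2\rceil$. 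This bookkeeping at the end of the path is the only fiddly step, and a short table of the four cases suffices.

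\emph{Complete graph.} If $D$ is a $k$-limited dominating set of $K_n$, each $u\in D$ has exactly $n-|D|$ neighbours outside $D$, so $|D|\ge n-k$. Conversely, any set of $n-k\ge 1$ vertices dominates $K_n$ and satisfies the constraint.

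\emph{Star.} Let $c$ be the centre of $S_n$. If $c\in D$, then $c$ has at most $k$ leaf neighbours outside $D$, so $D$ contains at least $n-1-k$ leaves and $|D|\ge n-k$; the set consisting of $c$ together with $n-1-k$ leaves attains this. If $c\notin D$, all $n-1$ leaves must lie in $D$, and $n-1\ge n-k$ because $k\ge 1$. Thus $\gammakL(S_n)=n-k$.

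\emph{Complete bipartite graph.} Let $A,B$ be the parts of $K_{m,n}$ with $|A|=m$ and $|B|=n$. Put $a=|D\cap A|$ and $b=|D\cap B|$, and split into cases.
\begin{itemize}
\item If $a,b\ge 1$, then $D$ dominates automatically. The constraint at a vertex of $D\cap A$ gives $n-b\le k$, and at a vertex of $D\cap B$ gives $m-a\le k$. Hence $|D|\ge \max(1,m-k)+(n-k)$, and choosing exactly these numbers of vertices attains it.
\item If $b=0$, then domination of $A$ forces $a=m$. Each vertex of $A$ then has $n>k$ outside neighbours, so this case is infeasible.
\item If $a=0$, then $b=n$ and each vertex of $B$ has $m$ outside neighbours. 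This requires $m\le k$ and gives $|D|=n\ge 1+n-k$, so it is never better.
\end{itemize}
Comparing the cases yields $1+n-k$ when $m\le k$ and $m+n-2k$ when $k<m$, as claimed.
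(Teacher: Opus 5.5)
Your proof is correct and follows essentially the same route as the paper. You use the counting bound $n-|D|\le k|D|$ together with an explicit period-$4$ construction for $P_n$; your pairs $v_{4j+2}v_{4j+3}$ play the role of the paper's $x_1$ and pairs $x_{4j}x_{4j+1}$. For $S_n$, $K_n$ and $K_{m,n}$ you use forcing arguments from the $k$-limited constraint, and your case split on $a=|D\cap A|$, $b=|D\cap B|$ simply packages the paper's separate treatment of $m\le k$ and $k<m$ into one computation.

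One harmless slip: when $n\equiv 2\pmod 4$, the vertex $v_n$ already lies in $D_0$ because its index is $\equiv 2$. So ``add $v_n$'' changes nothing, and $D_0$ by itself is the required set of size $n/2$.
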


\begin{proof}
\textit{(i)} Let $D\subseteq V(P_n)$ be a $\gammaenaL(P_n)$-set where $P_n=x_1x_2\ldots x_n$. Since every vertex in $V(P_n)-D$ must be dominated, and each vertex in $D$ can dominate at most one of its neighbors, it follows that $|V(P_n)-D|\leq|D|$. Thus, $n=|D|+|V(P_n)-D|\leq2|D|$, and so, $\gammaenaL(P_n)=|D|\geq\frac{n}{2}$. To show the opposite inequality, 
consider the set $S=\{x_i\in V(P_n); i\equiv 1\,(\text{mod }4)\text{ or }i\equiv 0\,(\text{mod }4)\}$. If $n$ is even, let $D'=S$, otherwise $D'=S\cup\{x_n\}$.  Clearly, $D'$ is a $1$-limited dominating set of $P_n$ of cardinality $|D'|=\lceil\frac n2\rceil$. Hence, $\gammaenaL(P_n)\leq|D'|= \lceil\frac n2\rceil$.

\medskip
\textit{(ii)} Let $c\in V(S_n)$ be the central vertex of the star $S_n$. Consider the set $D\subseteq V(S_n)$, consisting of $c$ and $n-k-1$ leaves. Then, $D$ is a $k$-limited dominating set, which implies that $\gammakL (S_n)\leq n-k$.

To derive the opposite inequality let $D'$ be a $\gammakL(S_n)$-set. If $c$ is not in $D'$, then every other vertex is in $D'$, thus $|D'|=n-1$. However, this is clearly a contradiction unless $k=1$. So, if $k\geq 2$, then $c\in D'$, and at least $n-k-1$ leaves must be contained in $D'$ as well, otherwise a contradiction with $k$-limited constraint is obtained. In the last case when $k=1$ and $c\in D'$, we immediately derive $|D'|\geq n-1$. Thus $|D'|\geq n-k$ for every $k\geq 1$.

\medskip
\textit{(iii)} Choose an arbitrary subset $D \subseteq V(K_n)$ of size $n - k$. Since $D$ is clearly a $k$-limited dominating set, we have
$\gammakL(K_n)\leq  n-k$. To show that equality holds, suppose to the contrary that 
there exists a $k$-limited dominating set $D'$ with $|D'| \leq n - k - 1$. But then $|V(K_n) - D'| \geq k + 1$, yielding a contradiction  with the $k$-limited constraint. Hence, $\gammakL(K_n) = n - k$. 

\medskip

\textit{(iv)} Let $G = K_{m,n}$ and let $(V_1,V_2)$ be a bipartition of $V(G)$ such that $|V_1| = m$ and $|V_2| = n$. 
Assume first that $m \leq k$. Let $D \subseteq V(G)$ be a set consisting of $n-k$ vertices from $V_2$ together with exactly one vertex from $V_1$. Clearly, $D$ is a $k$-limited dominating set. Hence, $\gammakL(G) \leq 1+n-k$. Assuming that $\gammakL(G) \leq n-k$, there exists a $k$-limited dominating set $D'$ with $|D'| \leq n-k$. If $ |D'\cap V_1|\geq1$, there exist at least $k+1$ vertices in $V_2$ that are not in $D'$. This means that any vertex in $ D'\cap V_1$ has at least $k+1$ neighbors outside $D'$, a contradiction. On the other hand, if $|D' \cap V_1| = 0 $, then every vertex of $V_2$ must belong to $D'$, otherwise some vertex of $V_2$ would not be dominated. In this case, $|D'| = |V_2| = n$, a contradiction again. We therefore obtain that $\gammakL(G) = n-k+1$.

Now suppose instead that $k < m$. Let $D \subseteq V(G)$ consist of $n-k$ vertices from $V_2$ and $m-k$ vertices from $V_1$. Then $D$ is clearly a $k$-limited dominating set, which gives $\gammakL(G) \leq  m+n-2k$.
To prove the opposite inequality, let $D'$ be any $k$-limited dominating set. Note that $D'$ must contain vertices from both, $V_1$ and $V_2$.
Take $u\in V_1\cap D'$. Since $u$ is adjacent to all $n$ vertices of $V_2$, we have
$|N(u)-D'|=n-|V_2\cap D'|\leq k$, yielding $|V_2\cap D'|\geq n-k$.
Similarly, for any $v\in V_2\cap D$,
$|N(v)-D'|=m-|V_1\cap D'|\leq k$ and thus $|V_1\cap D'| \geq m-k$. 
Adding the two inequalities gives
$$|D'|=|V_1\cap D'|+|V_2\cap D'| \geq (m-k)+(n-k)=m+n-2k.$$
Therefore we conclude that $\gamma_k^{\mathrm L}(K_{m,n})=m+n-2k$ for $k<m$.
\end{proof}

\section{Bounds}

We start this section by introducing the general bounds and discussing both their tightness and the conditions that allow for their improvement.

\begin{theorem} 
\label{bounds}
Let $G$ be a connected graph of order $n\geq 3$, and $k < \Delta(G)$. Then
$$
\max\left\{\gamma(G),\ \left\lceil \frac{n}{k+1} \right\rceil\right\} 
\leq \gammakL(G)  \leq 
n-k.
$$
\end{theorem}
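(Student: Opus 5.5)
Both lower bounds follow from a short counting argument. The upper bound comes from an explicit construction: take all vertices except a suitably chosen set of $k$ of them.

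\textbf{Lower bounds.} Every $k$-limited dominating set is a dominating set, so $\gamma(G)\le\gammakL(G)$ is immediate. For the second bound, let $D$ be a $\gammakL(G)$-set. Each vertex of $V(G)-D$ has a neighbor in $D$. Assign to each such vertex one of its neighbors in $D$. By the $k$-limited constraint, each $u\in D$ receives at most $|N(u)-D|\le k$ vertices. Hence $n-|D|\le k|D|$, so $|D|\ge n/(k+1)$, and integrality gives the ceiling. This is the same argument used for $P_n$ in Proposition~\ref{families}(i).

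\textbf{Upper bound.} Here I need a set $S$ of exactly $k$ vertices such that $D=V(G)-S$ is a $k$-limited dominating set. The $k$-limited constraint is automatic, since $|N(u)-D|\le|S|=k$ for every $u\in D$. So the only requirement is that every vertex of $S$ has a neighbor in $V(G)-S$.

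Since $k<\Delta(G)$, take a vertex $v$ with $\deg(v)=\Delta(G)\ge k+1$, and let $S$ consist of $k$ of its neighbors. Then $v\notin S$, and every vertex of $S$ is adjacent to $v\in D$, so $D$ dominates. Hence $\gammakL(G)\le n-k$.

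\textbf{Main obstacle.} I expect no real difficulty. The only point needing care is the choice of $S$ in the upper bound, namely ensuring that no vertex of $S$ is isolated from $D$. Choosing $S$ inside the neighborhood of a maximum-degree vertex resolves this, and the hypothesis $k<\Delta(G)$ is exactly what guarantees that such an $S$ exists. Connectivity and $n\ge3$ are not needed beyond this.
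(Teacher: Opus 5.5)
Your proposal is correct and follows the paper's proof essentially verbatim. It uses the same counting argument, $n-|D|\le k|D|$, for the bound $\lceil n/(k+1)\rceil$, the trivial comparison with $\gamma(G)$, and the same construction that removes $k$ neighbors of a maximum-degree vertex for the upper bound $n-k$.
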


\begin{proof}
Let $D \subseteq V(G)$ be a $\gammakL(G)$-set, i.e.~$|D| = \gammakL(G)$. Since every vertex in $V(G) - D$ must be adjacent to some vertex in $D$, and each $u \in D$ can dominate at most $k$ neighbors in $V(G) - D$, we obtain:
$$
n = |V(G)| = |D| + |V(G) - D| \leq |D| + k|D| = |D|\;(k+1),
$$
which implies
$$
\gammakL(G) \geq \left\lceil \frac{n}{k+1} \right\rceil.
$$
Furthermore, since every $k$-limited dominating set is in particular a dominating set, we have $\gammakL(G) \geq \gamma(G)$, and the lower bound follows.

To prove the upper bound, let $v \in V(G)$ be a vertex of maximum degree. Since $k < \deg(v)$, we may choose a subset $S \subseteq N(v)$ of exactly $k$ neighbors of $v$. Then the set $D= V(G)-S$ is a dominating set, and each vertex in $D$ has at most $k$ neighbors in $S=V(G)-D$.
Thus, $D$ is a $k$-limited dominating set of size $n - k$, which shows $\gammakL(G) \leq n - k$.
\end{proof}

The lower bounds in the above theorem, $\gamma(G)$ and $\lceil \frac{n}{k+1} \rceil$, are incomparable. Clearly, for a star on $n$ vertices we have $\lceil \frac{n}{k+1}\rceil \ge 2$, while its domination number is $1$. 
On the other hand, if $G$ is a graph on $n=2t$, $t\geq 3$, vertices, obtained from an arbitrary graph on $t$ vertices by attaching one pendant vertex to each of its vertices, we have $\gamma(G)=t>\lceil \tfrac{2t}{k+1} \rceil$ if $k\geq2$.

\medskip

A natural question concerning the obtained bounds is also whether they can be achieved, and if so, by which graphs.
For special graphs, known as efficient graphs, we now present a sufficient condition under which the equality $\gammakL(G) = \gamma(G)$ holds.

A set $S \subseteq V(G)$ in a graph $G$ is called an \emph{efficient dominating set} (also called a \textit{perfect code} \cite{Biggs}) if every vertex $v \in V(G)$ is dominated by exactly one vertex in $S$, that is, $|N[v] \cap S| = 1$. 
A graph that admits at least one efficient dominating set is called an \emph{efficient graph}, in literature known also as an \emph{efficient closed domination graph} \cite{kpy}. Not every graph is efficient; for example, it is easy to see that the cycles $C_4$ and $C_5$ are not efficient graphs. In fact, for $n\geq 3$ a cycle $C_n$ is efficient if and only if $n\equiv 0 \; (\text{mod } 3)$, while all paths are efficient (for more examples of efficient graphs see \cite{HHH3}).
Bange et al.~\cite{Bange} showed that if $S$ is an efficient dominating set of $G$, then it is a $\gamma(G)$-set.

For the purposes of studying $k$-limited domination in efficient graphs, we introduce the following notation: let $S$ be an efficient dominating set of $G$, $M(S)=\max\{\deg(v);\ v\in S\}$, and $m(G)=\min\{M(S);\, S\text{ is an efficient dominating set of }G\}$.

We will show that for efficient graphs, once $k$ reaches or exceeds $m(G)$, i.e.~the smallest possible maximum degree of the vertices among all efficient dominating sets, the $k$-limited domination number attains its minimum possible value $\gamma(G)$ and remains constant for all larger values of $k$.

\begin{proposition} 
\label{efficient}
Let $G$ be a connected efficient graph of order $n\geq 3$. If $m(G)\leq k<\Delta(G)$, we have
$\gammakL(G)=\gamma(G)$.
\end{proposition}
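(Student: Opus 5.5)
The plan is to prove the two inequalities separately, and the lower bound is already available. By Theorem~\ref{bounds}, every $k$-limited dominating set is a dominating set, so $\gammakL(G)\ge\gamma(G)$. It therefore remains to exhibit a $k$-limited dominating set of size $\gamma(G)$.

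For the construction, choose an efficient dominating set $S$ of $G$ that attains $m(G)$, that is, $M(S)=m(G)$. By the result of Bange et al.~\cite{Bange}, $S$ is a $\gamma(G)$-set, so $|S|=\gamma(G)$. I will show that $S$ itself satisfies the $k$-limited constraint. For any $u\in S$,
$$|N(u)-S|\le \deg(u)\le M(S)=m(G)\le k.$$
Since $S$ is also dominating, it is a $k$-limited dominating set. Hence $\gammakL(G)\le |S|=\gamma(G)$.

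There is an optional observation that makes the count exact rather than just an upper bound. Because every vertex of $G$ is dominated exactly once, $S$ is independent: two adjacent vertices of $S$ would each dominate the other as well as themselves, so $|N[v]\cap S|\ge 2$ for such a vertex $v$. Consequently $N(u)-S=N(u)$ for every $u\in S$. The inequality above is already enough for the proof, so this independence is not needed.

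I do not expect a genuine obstacle here. The only points to check are that the minimum defining $m(G)$ is attained by some efficient dominating set, which holds because there are finitely many of them, and that the hypothesis $k<\Delta(G)$ merely keeps us in the nontrivial range and plays no role in the argument. Combining the two inequalities gives $\gammakL(G)=\gamma(G)$.
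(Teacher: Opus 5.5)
Your proof is correct and follows the paper's argument essentially verbatim. You take an efficient dominating set $S$ with $M(S)=m(G)$, bound $|N(u)-S|\le \deg(u)\le m(G)\le k$ for $u\in S$, and combine $|S|=\gamma(G)$ with the lower bound from Theorem~\ref{bounds}. The only cosmetic difference is that the paper uses the partition of $V(G)$ into closed neighborhoods to get the equality $|N(u)-S|=\deg(u)$, whereas you rightly note that the trivial inequality suffices.
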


\begin{proof}
Let $S$ be an efficient dominating set of a connected graph $G$ with $M(S)=m(G)$. 
By the assumption, the closed neighborhoods $\{N[u];\ u\in S\}$ form a partition of $V(G)$, hence for each $u\in S$ we have $N(u)\cap S=\emptyset$ and thus $|N(u)-S|=\deg(u)\le M(S)=m(G)\le k$. 
So $S$ is a $k$-limited dominating set, which gives $\gammakL(G)\le |S|=\gamma(G)$. The opposite inequality holds
by Theorem \ref{bounds}, hence $\gammakL(G)=\gamma(G)$. 
\end{proof}

Proposition~\ref{efficient} also shows that the lower bound $\gamma(G)$ in Theorem~\ref{bounds} is tight. Among efficient graphs, one can also easily find graphs for which the lower bound 
$\lceil \frac{n}{k+1} \rceil$ 
in Theorem \ref{bounds}
is attained. A simple example of such a graph is $K_{k+2}$, where $\gamma(K_{k+2})=1 < \gammakL(K_{k+2})= \lceil \frac{k+2}{k+1}\rceil=2$.



\medskip
The tightness of the upper bound $n-k$ in Theorem \ref{bounds} is attained for stars and complete graphs, see Proposition \ref{families}. We summarize the above observations in the following corollary.

\begin{corollary}
The bounds in Theorem~\ref{bounds} are tight.
\end{corollary}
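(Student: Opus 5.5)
To show that the bounds of Theorem~\ref{bounds} are tight, it suffices to exhibit, for each of the three quantities $\gamma(G)$, $\lceil \frac{n}{k+1}\rceil$ and $n-k$, a connected graph of order $n\ge 3$ and an admissible value $1\le k<\Delta(G)$ for which $\gammakL(G)$ equals that quantity. We will draw all three examples from results already established, so no new structural argument is needed.

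For the lower bound $\gamma(G)$ we use Proposition~\ref{efficient}. We need a connected efficient graph with $m(G)<\Delta(G)$, so that some $k$ with $m(G)\le k<\Delta(G)$ exists. The path $P_n$ with $n\equiv 0 \pmod 3$ and $n\ge 6$ works: the set $\{x_2,x_5,\dots,x_{n-1}\}$ is an efficient dominating set all of whose vertices have degree $2$. To get $m(G)<\Delta(G)$, we instead take a spider, i.e.\ a star $K_{1,r}$ with $r\ge 3$ and each edge subdivided once. The leaves together with the centre form an efficient dominating set whose maximum degree is $r$, which equals $\Delta$, so this alone is not enough. A cleaner choice is a graph where the efficient dominating set consists of low-degree vertices, for instance the corona of $K_t$ with $t\ge 4$. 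Here the pendant vertices form an efficient dominating set $S$ with $M(S)=1$, while $\Delta=t$. Taking $k=1$, Proposition~\ref{efficient} gives $\gammakL(G)=\gamma(G)=t$.

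For the lower bound $\lceil\frac{n}{k+1}\rceil$, the paper already notes that $K_{k+2}$ satisfies $\gammakL(K_{k+2})=2=\lceil\frac{k+2}{k+1}\rceil$; this follows from Proposition~\ref{families}\ref{polni} with $n=k+2$. As a second family we may use $P_n$ with $k=1$, where Proposition~\ref{families}(i) gives exactly $\lceil n/2\rceil$. For the upper bound $n-k$, Proposition~\ref{families}(ii) and (iii) give $\gammakL(S_n)=\gammakL(K_n)=n-k$ for all admissible $k$.

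The only point requiring care is the first one: making sure the chosen efficient graph actually admits a $k$ in the range $m(G)\le k<\Delta(G)$. The corona example settles this, since $1=m(G)<\Delta(G)$. With all three cases covered, the corollary follows.
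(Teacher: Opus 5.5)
Your final argument is correct and is essentially the paper's: tightness of $\gamma(G)$ via Proposition~\ref{efficient}, of $\lceil \frac{n}{k+1}\rceil$ via $K_{k+2}$, and of $n-k$ via stars and complete graphs from Proposition~\ref{families}. Your one addition is a real improvement. The paper cites Proposition~\ref{efficient} without naming a graph with $m(G)<\Delta(G)$, and without one no admissible $k$ exists. You supply such a graph: the corona $K_t\odot K_1$, whose pendant vertices form an efficient dominating set of maximum degree $1<t=\Delta(G)$.

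You should delete the two abandoned detours, because both contain false claims.
\begin{itemize}
\item The path $P_n$ with $n\equiv 0 \pmod 3$ does not ``work''. Its unique efficient dominating set is $\{x_2,x_5,\dots,x_{n-1}\}$, so $m(P_n)=2=\Delta(P_n)$ and no admissible $k$ exists.
\item In the subdivided star, the centre together with the leaves is not an efficient dominating set: each subdivision vertex $m_i$ is adjacent to both the centre and its leaf $\ell_i$. In fact $\{m_1,\ell_2,\dots,\ell_r\}$ is efficient with maximum degree $2<r$, so this graph would have worked for $2\le k<r$.
\end{itemize}

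With $k=1$, your corona example has $\gamma(G)=t=\lceil\frac n2\rceil$, so it attains both lower bounds at once. Indeed, by Ore's bound $\gamma(G)\le\frac n2$, for $k=1$ the term $\gamma(G)$ never exceeds $\lceil\frac{n}{k+1}\rceil$. If you want an example where $\gamma(G)$ is the strictly larger lower bound and is attained, keep the same graph but take $2\le k<t$. Proposition~\ref{efficient} still gives $\gammakL(G)=\gamma(G)=t$, while $\lceil\frac{2t}{k+1}\rceil\le\lceil\frac{2t}{3}\rceil<t$.
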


The general upper bound $n - k$ from Proposition \ref{bounds} can, in some cases, be significantly improved.  
In particular, if in a graph $G$ there exists a $\gamma(G)$-set that satisfies an additional structural property, we can bound $\gammakL(G)$ in terms of $\gamma(G)$ itself.  
This is formalized in the following proposition.



\begin{proposition}
\label{Delta-v-zg}
Let $G$ be a connected graph of order $n\geq 3$ and let $k$ be an integer such that $1 \le k < \Delta(G)$. 
Suppose there exists a $\gamma(G)$-set $D$ of $G$ such that, for every $u\in D$ with $\deg(u)>k$, 
there exist at least $\deg(u)-k$ pairwise distinct neighbors $w$ of $u$ with $\deg(w)\le k+1$. Then we have the following tight bound 
$$
\gammakL(G) \leq (\Delta(G)-k+1)\,\gamma (G).
$$
\end{proposition}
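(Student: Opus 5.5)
Starting from the $\gamma(G)$-set $D$ given in the hypothesis, we enlarge it so that it becomes $k$-limited without breaking the degree constraint anywhere. For each $u\in D$ with $\deg(u)>k$, the hypothesis supplies $\deg(u)-k$ pairwise distinct neighbors of $u$ of degree at most $k+1$. Fix such a set $W_u\subseteq N(u)$, $|W_u|=\deg(u)-k$, for every such $u$. Set $W_u=\emptyset$ when $\deg(u)\le k$. We then define
$$D'=D\cup\bigcup_{u\in D}W_u .$$
Since $D\subseteq D'$, the set $D'$ is dominating. Its size satisfies
$$|D'|\le \sum_{u\in D}\bigl(1+\max\{0,\deg(u)-k\}\bigr)\le \gamma(G)\,(1+\Delta(G)-k),$$
which is exactly the claimed bound.

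Next we verify the $k$-limited constraint. For $u\in D$, all of $W_u$ lies in $D'$, so at most $\deg(u)-|W_u|\le k$ neighbors of $u$ lie outside $D'$; if $\deg(u)\le k$ this is trivial. For a vertex $w\in W_u-D$, we know $\deg(w)\le k+1$ and its neighbor $u$ belongs to $D'$. Hence $w$ has at most $\deg(w)-1\le k$ neighbors outside $D'$. This is the step where the hypothesis $\deg(w)\le k+1$ is essential, and it is really the only delicate point. Overlaps between different sets $W_u$, or with $D$, only decrease $|D'|$ and do not harm the constraint, because every vertex of $D'$ is either in $D$ or adjacent to some $u\in D\subseteq D'$. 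Thus $D'$ is a $k$-limited dominating set and $\gammakL(G)\le(\Delta(G)-k+1)\gamma(G)$.

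For tightness we need a graph in which equality holds. The natural candidate is a star $S_n$ with $n\ge4$ and $k<n-1=\Delta$. Its center forms a $\gamma$-set, every leaf has degree $1\le k+1$, and Proposition~\ref{families} gives $\gammakL(S_n)=n-k=(\Delta-k+1)\cdot 1$. Equality therefore holds. If a family with larger $\gamma(G)$ is desired, one can take disjoint stars joined suitably; but the star already establishes that the bound is tight. The main obstacle is making sure the added vertices $w$ do not themselves violate the constraint, and the degree condition handles this.
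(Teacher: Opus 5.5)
Your proof is correct and is essentially the paper's argument: you augment the $\gamma(G)$-set $D$ by $\deg(u)-k$ low-degree neighbors of each $u\in D$ with $\deg(u)>k$, check the constraint separately for $D$ and for the added vertices (using $\deg(w)\le k+1$ and $u\in N(w)\cap D'$), and use a star for tightness. Your count with $\max\{0,\deg(u)-k\}$ is slightly more careful than the paper's $\sum_{u\in D}(\deg(u)-k)$, and the star is the cleaner tightness example, since $K_n$ (which the paper also cites) satisfies the hypothesis only when $k=n-2$.
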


\begin{proof} 
Let $D$ be a $\gamma(G)$-set satisfying the assumption of the proposition.
For each vertex $u\in D$ we proceed as follows. If $\deg(u)\le k$, we set $L_u=\emptyset$,
otherwise, we choose a subset $L_u\subseteq N(u)$ consisting of exactly $\deg(u)-k$ distinct neighbors of $u$ whose degrees are at most $k+1$. 
This selection is possible by the assumption on $D$. Note that we do not require the sets $L_u$ to be mutually  disjoint, nor disjoint from $D$.
Let $$D' = D \cup \bigcup_{u\in D} L_u.$$
Since $D\subseteq D'$ and $D$ dominates $G$, $D'$ also dominates $G$. For vertices of $D'$ we now verify the $k$-limited constraint. 
For $u\in D$ with $\deg(u)\le k$ we have $|N(u)-D'|\leq \deg(u)\le k$.
For $u\in D$ with $\deg(u)>k$, it holds $L_u\subseteq N(u)\cap D'$ and $|L_u|=\deg(u)-k$, hence
$$|N(u)-D'|=\deg(u)-|N(u)\cap D'|\leq \deg(u)-|L_u|=k.$$
Finally, for any $v\in L_u$ we have $\deg(v)\le k+1$ and $u\in N(v)\cap D'$, thus
$|N(v)-D'|\leq \deg(v)-1\leq k$.
Thus $D'$ is a $k$-limited dominating set and for its size we derive
\begin{align*}
|D'| & =|D \cup \bigcup_{u\in D} L_u|
\leq  |D| + \sum_{u\in D}|L_u|\leq |D| + \sum_{u\in D} (\deg(u)-k) \\
& \le |D| + \sum_{u\in D} (\Delta(G)-k)
= (\Delta(G)-k+1)\,|D| \\
& = (\Delta(G)-k+1)\,\gamma(G).
\end{align*}

\noindent Therefore $\gammakL(G)\le |D'|\le (\Delta(G)-k+1)\,\gamma(G)$.
The bound is tight, as by Proposition \ref{families} it is attained for complete graphs and stars. 
\end{proof}

If $G$ is a connected graph satisfying the assumption in Proposition \ref{Delta-v-zg}, then the natural question is, which of the upper bounds $n-k$ and $(\Delta(G)-k+1)\,\gamma(G)$ provides the stronger estimate. It turns out that the two bounds are in fact incomparable, as neither dominates the other in general. To illustrate this phenomenon, we consider the following family of graphs.

A \textit{broom} is a graph obtained from a path by attaching a star at one of its endvertices. Formally, a broom $B(p,m)$ of order $n=p+m$ is constructed by taking a path $P_p$ on $p$ vertices and attaching $m$ pendant vertices to one end of this path.

Specifically, examine the broom $G=B(5,m)$ with $m\geq 3$, $n = 5+m$ and $\Delta(G) = m+1$.  
Note that $\gamma(G) = 2$.  For such graphs, both expressions $n-k$ and $(\Delta(G)-k+1)\gamma(G)$ can be evaluated explicitly, and the outcome is summarized in Table~\ref{table:broom}. In the first case, when $k=m-2$, we observe that the bound $n-k$ yields the smaller value, in the second case, for $k=m-1$, both bounds coincide, while in the third case, when $k=m$, the bound $(\Delta(G)-k+1)\gamma(G)$ is sharper. Thus, even within this restricted graph family, the bounds are incomparable, as shown in Table \ref{table:broom}.

\begin{table}[H]
\centering
\begin{tabular}{|c|c|c|}
\hline
$k$ & $n-k$ & $(\Delta(G)-k+1)\gamma(G)$ \\ \hline
$m-2$ & \textbf{7} & 8 \\ \hline
$m-1$ & \textbf{6}  & \textbf{6}\\ \hline
$m$ & 5 & \textbf{4} \\ \hline
\end{tabular}
\caption{Comparison of the two bounds for the broom $G=B(5,m)$ with $m \geq 3$.}
\label{table:broom}
\end{table}

We can therefore summarize our observations as follows.

\begin{corollary}
Let $G$ be a connected graph of order $n\geq 3$ and let $k$ be an integer such that $1 \le k < \Delta(G)$. 
Suppose there exists a $\gamma(G)$-set $D$ of $G$ such that for every $u\in D$ with $\deg(u)>k$
there exist at least $\deg(u)-k$ pairwise distinct neighbors $w$ of $u$ with $\deg(w)\le k+1$. Then we have the tight bound $$\gammakL(G) \leq \min\{n-k,\;(\Delta(G)-k+1)\gamma (G)\}.$$
\end{corollary}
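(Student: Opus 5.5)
The corollary combines two bounds already in hand, so the plan is short. The upper bound $\gammakL(G)\le n-k$ holds for every connected graph of order $n\ge 3$ with $k<\Delta(G)$ by Theorem~\ref{bounds}. Under the additional hypothesis on the $\gamma(G)$-set $D$, Proposition~\ref{Delta-v-zg} gives $\gammakL(G)\le (\Delta(G)-k+1)\gamma(G)$. Since $\gammakL(G)$ lies below both quantities, it lies below their minimum. This settles the inequality immediately.

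The only real content is the word ``tight''. There are two ways to read it, and I would address both.

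First, the combined bound is attained. For complete graphs $K_n$ we have $\gamma=1$ and $\Delta=n-1$, so $(\Delta-k+1)\gamma = n-k$. By Proposition~\ref{families}\ref{polni}, $\gammakL(K_n)=n-k$, so the minimum is attained. The hypothesis holds here, since every vertex has degree $n-1\le k+1$ only when $k\ge n-2$. To avoid this restriction, I would instead use the star $S_n$ with $D=\{c\}$. The leaves have degree $1\le k+1$, and there are $n-1\ge \deg(c)-k$ of them, so the hypothesis holds for every $k$. We have $\gamma=1$ and $\Delta=n-1$, so both bounds equal $n-k$, and Proposition~\ref{families}(ii) gives $\gammakL(S_n)=n-k$.

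Second, neither term of the minimum can be dropped. The broom $B(5,m)$ in Table~\ref{table:broom} shows each term is strictly smaller than the other for suitable $k$. I would therefore verify that the broom satisfies the hypothesis of Proposition~\ref{Delta-v-zg}. Write the broom as the path $x_1x_2\cdots x_5$ with $m$ leaves attached at $x_1$. Take $D=\{x_1,x_4\}$. The vertex $x_1$ has $m$ leaf neighbours of degree $1$, and $\deg(x_1)-k=m+1-k\le m$ for $k\ge1$. The vertex $x_4$ has degree $2$, and $x_5$ is a neighbour of degree $1$, so whenever $\deg(x_4)>k$ the required neighbour exists.

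The only mild obstacle is this bookkeeping of the hypothesis for the witnessing families. Computing the actual values $\gammakL(B(5,m))$ is not needed for incomparability, which concerns the bounds themselves. Tightness is already witnessed by the star.
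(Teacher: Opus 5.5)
Your proposal is correct and is essentially the paper's argument: the inequality is Theorem~\ref{bounds} combined with Proposition~\ref{Delta-v-zg}, tightness comes from stars (and complete graphs) via Proposition~\ref{families}, and the broom $B(5,m)$ of Table~\ref{table:broom} shows that neither term of the minimum can be dropped. You also fill in details the paper leaves implicit. You check that $B(5,m)$ with $D=\{x_1,x_4\}$ actually satisfies the degree hypothesis. You also note that $K_n$ satisfies it only when $k=n-2$ (your sentence saying so is garbled, but the point is right), so the star $S_n$ with $n\ge 4$ is the witness that works for every $k$.
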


\medskip

\section{$(1,t)$-domination and regular graphs}

In this section, we shall bound the $k$-limited domination number in terms of $(1,t)$-domination, which yields interesting consequences for regular graphs.

Let $t \ge 1$ be an integer and let $G$ be a graph with $\delta(G) \ge t$.  
A set $S \subseteq V(G)$ is a \emph{$(1,t)$-dominating set} if every vertex in $V(G)- S$ has at least one neighbor in $S$, and every vertex in $S$ has at least $t$ neighbors in $S$. The minimum cardinality of such a set is the \emph{$(1,t)$-domination number}, denoted by $\gamma_{1,t}(G)$, and any $(1,t)$-dominating set of size $\gamma_{1,t}(G)$ is called a \emph{$\gamma_{1,t}(G)$-set}. For the special case $t=1$, the $(1,1)$-domination number coincides with the \emph{total domination number} $\gamma_t(G)$, which is defined as the minimum cardinality of a set $S \subseteq V(G)$ such that every vertex of $G$ has a neighbor in $S$.

\medskip
Since $k<\Delta(G)$, we have $\Delta(G)-k>0$, thus we can consider $(1,\Delta(G)-k)$-domination for $k\geq \Delta(G)-\delta(G)$.

\begin{proposition}
\label{zgornja-1t}
Let $G$ be a connected graph of order $n\geq 3$ and let $k$ be an integer such that $\Delta(G)-\delta(G)\leq k < \Delta(G)$.
Then $\gammakL(G)\leq \gamma_{1,\Delta(G)-k}(G)$.
\end{proposition}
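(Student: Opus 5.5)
The plan is to show directly that every $(1,\Delta(G)-k)$-dominating set of $G$ is already a $k$-limited dominating set. The inequality $\gammakL(G)\leq \gamma_{1,\Delta(G)-k}(G)$ then follows by taking a minimum such set. First we check that the right-hand side is well defined. The hypothesis $\Delta(G)-\delta(G)\le k$ gives $\delta(G)\ge \Delta(G)-k$, and $k<\Delta(G)$ gives $t=\Delta(G)-k\ge 1$. So the definition of $(1,t)$-domination applies with $t=\Delta(G)-k$, and a $\gamma_{1,t}(G)$-set exists; for instance $V(G)$ itself is a $(1,t)$-dominating set, since $\delta(G)\ge t$.

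Next, let $S$ be a $\gamma_{1,\Delta(G)-k}(G)$-set. By definition, every vertex of $V(G)-S$ has a neighbor in $S$, so $S$ is a dominating set. It remains to verify the $k$-limited constraint. Take any $u\in S$. Then $u$ has at least $\Delta(G)-k$ neighbors in $S$, hence
$$|N(u)-S| = \deg(u)-|N(u)\cap S| \le \Delta(G)-(\Delta(G)-k)=k.$$
Thus $S$ is a $k$-limited dominating set, and $\gammakL(G)\le |S|=\gamma_{1,\Delta(G)-k}(G)$.

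There is no real obstacle here. The only point needing care is the well-definedness check, namely that $\delta(G)\ge t$ so that $(1,t)$-dominating sets are admissible, and this is exactly what the lower bound on $k$ in the statement provides.
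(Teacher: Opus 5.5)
Your proof is correct and follows the paper's argument essentially verbatim. You set $t=\Delta(G)-k$, check that $1\le t\le\delta(G)$, and observe that a $\gamma_{1,t}(G)$-set $S$ dominates $G$ while each $u\in S$ has at most $\deg(u)-t\le\Delta(G)-t=k$ neighbors outside $S$.
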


\begin{proof}
Let $t=\Delta(G)-k$. Then $0<t\leq \delta(G)$, which ensures that $(1,t)$-domination 
is well-defined.  
Let $D \subseteq V(G)$ be a $\gamma_{1,t}(G)$-set. This means that every vertex in $V(G)-D$ has at least one neighbor in $D$, and every vertex $v\in D$ has at least $t$ neighbors in $D$. The latter implies that $v$ has at most $\Delta(G)-t=k$ neighbors in $V(G)-D$. Therefore $D$ is also a $k$-limited dominating set of $G$, which implies $\gammakL(G)\leq|D|=\gamma_{1,t}(G)$.
\end{proof}

Similarly, under a suitable degree condition we can derive a lower bound for $\gammakL(G)$ via $(1,t)$-domination.

\begin{proposition}
\label{spodnja-1t}
Let $G$ be a connected graph of order $n\geq 3$ and let $k$ be an integer with $1 \le k < \Delta(G)$. 
If $\delta(G) \ge k+1$, then
$\gammakL(G) \geq 
\gamma_{1,\,\delta(G)-k}(G)$.
\end{proposition}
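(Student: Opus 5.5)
The plan is to show directly that every $k$-limited dominating set is already a $(1,\delta(G)-k)$-dominating set. Minimality then gives the inequality immediately. Set $t=\delta(G)-k$. The hypothesis $\delta(G)\ge k+1$ gives $1\le t\le \delta(G)$, so $(1,t)$-domination is well defined and $\gamma_{1,t}(G)$ makes sense.

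Let $D$ be a $\gammakL(G)$-set. First, $D$ is a dominating set by Definition~\ref{gldef}, so every vertex of $V(G)-D$ has a neighbor in $D$. This is the first condition of $(1,t)$-domination.

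Next, fix $v\in D$. By the $k$-limited constraint, $|N(v)-D|\le k$. Hence
$$|N(v)\cap D| = \deg(v)-|N(v)-D| \ge \deg(v)-k \ge \delta(G)-k = t,$$
so every vertex of $D$ has at least $t$ neighbors inside $D$. Therefore $D$ is a $(1,t)$-dominating set, and $\gamma_{1,t}(G)\le |D|=\gammakL(G)$.

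There is no real obstacle here. The only point needing care is checking that $t\ge 1$, so that the $(1,t)$ parameter falls within the range where it is defined, and this is exactly what the assumption $\delta(G)\ge k+1$ provides. The argument mirrors the proof of Proposition~\ref{zgornja-1t} with the roles of the two parameters reversed, and uses $\delta(G)$ in place of $\Delta(G)$. For $d$-regular graphs it combines with that proposition to give the announced equality $\gammakL(G)=\gamma_{1,d-k}(G)$.
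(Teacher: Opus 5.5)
Your proof is correct and essentially the same as the paper's. You set $t=\delta(G)-k$, take a $\gammakL(G)$-set $D$, and use the $k$-limited constraint to get $|N(u)\cap D|\ge \deg(u)-k\ge t$ for every $u\in D$, so $D$ is a $(1,t)$-dominating set, with $\delta(G)\ge k+1$ guaranteeing $t\ge 1$ exactly as in the paper.
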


\begin{proof}
Set $t=\delta(G)-k$. The assumption $\delta(G)\ge k+1$ yields $\delta(G) \geq t\ge 1$, so $(1,t)$-domination is well-defined. 
Let $D$ be a minimum $k$-limited dominating set.
Then vertices in $V(G)-D$ have a neighbor in $D$. Furthermore,
for each $u\in D$, $|N(u)- D|\le k$, hence $|N(u)\cap D|\ge \text{deg}(u)-k \ge \delta(G)-k=t$. 
Therefore $D$ is a $(1,t)$-dominating set and $\gamma_{1,\,\delta(G)-k}(G)\le |D|=\gammakL(G)$.
\end{proof}

In the case of regular graphs the degree conditions in Propositions 
\ref{zgornja-1t} and \ref{spodnja-1t}
are automatically satisfied, so the two bounds coincide, yielding the exact formula for $\gammakL(G)$ in terms of $(1,t)$-domination.  
This simultaneously establishes the tightness of the bounds given in Propositions~\ref{zgornja-1t} and ~\ref{spodnja-1t}.

\begin{corollary}
\label{d-regular}
Let $G$ be a connected $d$-regular graph of order $n\geq 3$ and let $k$ be an integer such that $1\leq k<d$. Then $\gammakL(G)=\gamma_{1,d-k}(G)$.
\end{corollary}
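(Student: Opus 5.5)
The plan is to obtain the equality by combining the two inequalities just proved, Proposition~\ref{zgornja-1t} and Proposition~\ref{spodnja-1t}, after checking that a $d$-regular graph satisfies both sets of hypotheses. In a $d$-regular graph we have $\delta(G)=\Delta(G)=d$, so both propositions involve the same parameter $t=d-k$.

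\emph{Upper bound.} Proposition~\ref{zgornja-1t} needs $\Delta(G)-\delta(G)\le k<\Delta(G)$. Here $\Delta(G)-\delta(G)=0$, so the lower condition holds for every $k\ge 1$. The upper condition $k<\Delta(G)$ is exactly the assumption $k<d$. Hence $\gammakL(G)\le \gamma_{1,d-k}(G)$. The $(1,d-k)$-domination number is well defined because $1\le d-k\le d=\delta(G)$.

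\emph{Lower bound.} Proposition~\ref{spodnja-1t} needs $1\le k<\Delta(G)$ and $\delta(G)\ge k+1$. The first holds by assumption. The second is $d\ge k+1$, which is equivalent to $k<d$ for integers. Hence $\gammakL(G)\ge \gamma_{1,\delta(G)-k}(G)=\gamma_{1,d-k}(G)$.

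Combining the two inequalities gives $\gammakL(G)=\gamma_{1,d-k}(G)$. There is no real obstacle. The only point needing care is confirming that both degree hypotheses are satisfied at once, and that the two propositions use the same $t$ because $\Delta(G)=\delta(G)$. The connectivity and $n\ge 3$ assumptions are simply inherited from the two propositions.
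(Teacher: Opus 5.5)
Your proof is correct and is the same as the paper's: for a $d$-regular graph, $\Delta(G)=\delta(G)=d$, so the hypotheses of Propositions~\ref{zgornja-1t} and~\ref{spodnja-1t} both hold with the same $t=d-k$. The upper and lower bounds therefore coincide.
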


In the special case when $G$ is a connected $d$-regular graph and $k=d-1$, Corollary \ref{d-regular} implies that $\gamma_{d-1}^{\mathrm L}(G)=\gamma_{1,1}(G)=\gt(G)$. 
For $d=2$, the exact result for the total domination number is given in \cite{MA_Henning}, which yields the following corollary.

\begin{corollary}\label{2-regular}
Let $C_n$ be a cycle on $n\geq 3$ vertices. Then 
$\gammaenaL(C_n)=\left \lfloor \frac n2 \right \rfloor+ \left \lceil \frac n4 \right \rceil - \left \lfloor \frac n4 \right \rfloor.$
\end{corollary}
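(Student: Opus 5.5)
By Corollary~\ref{d-regular} applied with $d=2$ and $k=1$, we have $\gammaenaL(C_n)=\gamma_{1,1}(C_n)=\gt(C_n)$. The plan is therefore to reduce the statement to the known value of the total domination number of a cycle, $\gt(C_n)=\lfloor n/2\rfloor+\lceil n/4\rceil-\lfloor n/4\rfloor$, from \cite{MA_Henning}. The hypotheses are immediate: $C_n$ is connected and $2$-regular on $n\ge 3$ vertices, and $1\le k=1<2=d$.

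For a self-contained check, I would verify that formula directly in the language of $1$-limited domination. A set $D\subseteq V(C_n)$ is $1$-limited dominating exactly when every vertex of $D$ has a neighbour in $D$ (degree $2$, at most one neighbour outside) and $D$ dominates. So $D$ is a union of cyclic runs of consecutive vertices, each run of length at least $2$. The gaps between consecutive runs have length at most $2$, since a gap vertex must be adjacent to an endpoint of a run.

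For the lower bound I would count in the worst configuration. Each run of length $r\ge 2$ followed by a gap of length $g\le 2$ covers $r+g$ vertices using $r$ dominators, so the ratio of dominators to covered vertices is $r/(r+g)\ge 1/2$, with equality only for blocks of the form $r=2$, $g=2$. Summing over blocks gives $|D|\ge n/2$. A parity and residue analysis modulo $4$ then yields the exact value.

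\begin{itemize}[label={}]
\item If $n\equiv 0\pmod 4$, the pattern of blocks $(2,2)$ repeated attains $n/2$.
\item If $n\equiv 1$ or $3\pmod 4$, one block must be non-optimal, forcing $\lceil n/2\rceil$.
\item If $n\equiv 2\pmod 4$, the blocks cannot all have $r=g=2$, so $|D|>n/2$; the cost is $n/2+1$, attained by replacing one block with $(3,2)$ and another with $(3,1)$, or with $(2,1)$ and $(3,2)$, etc.
\end{itemize}

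These cases match $\lfloor n/2\rfloor+\lceil n/4\rceil-\lfloor n/4\rfloor$ in each residue class.

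The only real obstacle is the $n\equiv 2\pmod 4$ case, where one must argue that $n/2$ is impossible. This follows because equality forces every block to be exactly $(2,2)$, hence $4\mid n$. Since the citation to \cite{MA_Henning} already gives this, the proof itself is essentially a one-line application of Corollary~\ref{d-regular}.
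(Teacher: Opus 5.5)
Your argument is correct and is exactly the paper's proof: Corollary~\ref{d-regular} with $d=2$, $k=1$ gives $\gammaenaL(C_n)=\gamma_{1,1}(C_n)=\gt(C_n)$, and the formula is then quoted from \cite{MA_Henning}. One slip in your optional self-contained check: for $n\equiv 2 \pmod 4$ the non-$(2,2)$ blocks must together cover a number of vertices congruent to $2$ modulo $4$, so your witnesses $(3,2),(3,1)$ (covering $9$ vertices) and $(2,1),(3,2)$ (covering $8$) do not fit, whereas $(2,2)$ repeated $(n-6)/4$ times followed by a single $(4,2)$ block (or two $(3,2)$ blocks when $n\ge 10$) does attain $n/2+1$.
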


It is known that for a cubic graph $G$ it holds  $\frac n3
\leq \gamma_t(G)\leq
\frac n2$, see results in 
\cite{Henning}.
To see that both bounds are tight, consider the prism graph over $C_n$, denoted by $C_n\Box K_2$, and obtained from two copies of a cycle $C_n$ by joining the corresponding vertices. It is easy to verify that $\gamma_t(C_3\Box K_2)=2$ and  $\gamma_t(C_4\Box K_2)=4$.

Furthermore, in \cite{fakhran} the tight bounds $\frac{n}{2}\leq \gamma_{1,2}(G)\leq\frac{3n}{4}$ for a cubic graph $G$ were established. Thus, by Corollary \ref{d-regular} we arrive at the following result.

\begin{corollary}\label{3-regular}
Let $G$ be a connected cubic graph on $n\geq 3$ vertices. Then $\frac n3 \leq \gamma_2^{\mathrm L}(G)\leq \frac{n}{2}$ and $\frac{n}{2}\leq\gammaenaL(G)\leq \frac{3n}{4}$, and the bounds are tight.
\end{corollary}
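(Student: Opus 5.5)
The plan is to reduce both claims to known results on cubic graphs by means of Corollary~\ref{d-regular}. For a connected cubic graph we have $d=3$, and the relevant values $k=2$ and $k=1$ both satisfy $1\le k<3$. Corollary~\ref{d-regular} therefore gives
$$\gamma_2^{\mathrm L}(G)=\gamma_{1,1}(G)=\gt(G)\qquad\text{and}\qquad \gammaenaL(G)=\gamma_{1,2}(G).$$
Once these identities are in place, the inequalities follow immediately by substitution. The cited bound $\frac n3\le \gt(G)\le \frac n2$ for cubic graphs from \cite{Henning} gives the first pair. The bound $\frac n2\le\gamma_{1,2}(G)\le \frac{3n}{4}$ from \cite{fakhran} gives the second pair.

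Tightness transfers through the same identities, so it suffices to exhibit extremal cubic graphs for $\gt$ and for $\gamma_{1,2}$.

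For $\gamma_2^{\mathrm L}$ I would use the prisms mentioned above.
\begin{itemize}
\item $C_3\Box K_2$ has $n=6$ and $\gt=2=n/3$. Two adjacent vertices in different triangles totally dominate.
\item $C_4\Box K_2$ (the cube $Q_3$) has $n=8$ and $\gt=4=n/2$.
\end{itemize}
The value for $Q_3$ needs a short argument. Each vertex dominates exactly $3$ vertices, so three vertices cover at most $9$ slots. A total dominating set of size $3$ would also have to induce a graph without isolated vertices, i.e.\ a path $P_3$. In $Q_3$, the two ends of such a path share two common neighbours, so the coverage double-counts. One checks that the union of the three open neighbourhoods misses a vertex.

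For $\gammaenaL$ I would also try small cubic graphs and verify directly, using the lower bound $\gammaenaL\ge n/2$ from Theorem~\ref{bounds} with $k=1$.
\begin{itemize}
\item For the value $n/2$, take $K_4$. Proposition~\ref{families}\ref{polni} gives $\gammaenaL(K_4)=4-1=3$, which is not $2$. So instead take $K_{3,3}$. Proposition~\ref{families} with $m=n=3>k=1$ gives $3+3-2=4$, which is not $3$ either. A perfect-matching-based example is needed instead: the prism $C_3\Box K_2$. Choose one triangle as $D$. Each vertex of $D$ then has exactly one outside neighbour, and every outside vertex is dominated. This gives $3=n/2$.
\item For the value $3n/4$, $K_4$ works, since $\gammaenaL(K_4)=3=\frac{3\cdot 4}{4}$ by Proposition~\ref{families}\ref{polni}.
\end{itemize}

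The main obstacle is only the verification of the small extremal examples, in particular that $\gt(Q_3)=4$. This is a finite check, and for it I would rely on the references \cite{Henning,fakhran}, which establish tightness of these bounds.
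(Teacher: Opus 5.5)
Your proposal is correct and follows the paper's route. You use Corollary~\ref{d-regular} with $d=3$ to identify $\gamma_2^{\mathrm L}$ with $\gt$ and $\gammaenaL$ with $\gamma_{1,2}$, then take the bounds from \cite{Henning} and \cite{fakhran} and the prisms $C_3\Box K_2$ and $C_4\Box K_2$ for tightness. Where the paper simply defers to the literature, your explicit $\gammaenaL$ witnesses ($C_3\Box K_2$ with value $3=n/2$, and $K_4$ with value $3=3n/4$) and your counting argument for $\gt(Q_3)=4$ are valid.
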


\section{$1$-limited domination vs.~packing}
\label{sec:1L}

It is of independent interest to consider the $k$-limited domination for a fixed value of $k$. In this section we present some observations for the basic case when $k=1$. 

We already know that in the case of cubic graphs $1$-limited domination coincides with $(1,2)$-domination. In what follows, we consider general graphs and establish the upper bounds on $1$-limited domination number in terms of the packing number of a graph, thereby improving the upper bound previously given in Theorem \ref{bounds}.

Recall that a subset $S\subseteq V(G)$ of a graph $G$ is a \textit{packing} in $G$ if for every distinct vertices $u,v\in S$ we have $N[u]\cap N[v]=\emptyset$. Equivalently, we can say $S$ is a packing if the vertices in $S$ are
pairwise at distance at least $3$ in $G$, or that $|N[v]\cap S|\leq 1$ for every $v\in V(G)$. The packing number $\rho(G)$ is the maximum cardinality of a packing. A packing of order $\rho(G)$ will be referred to as a \textit{$\rho(G)$-set}. It is easy to see that $\rho(G)\leq \gamma(G)$, see \cite{HHH3}. From the classical bound $\rho(G)\le \frac{n}{\delta(G)+1}$ for connected graphs, we obtain the general estimate $1 \le \rho(G) \le \lfloor \frac{n}{2} \rfloor$, 
see~\cite{klimited}.

\begin{theorem} \label{packing}
Let $G$ be a connected graph of order $n\geq 3$. Then $$\Big\lceil \frac{n}{2} \Big\rceil \leq \gammaenaL(G)\leq n-\rho(G).$$
The upper bound is attained if and only if there exists a $\rho(G)$-set $P\subseteq V(G)$ such that its complement is a $\gammaenaL(G)$-set.
\end{theorem}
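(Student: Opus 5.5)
The lower bound is immediate from Theorem~\ref{bounds} with $k=1$: a $1$-limited dominating set $D$ satisfies $n\le 2|D|$, so $\gammaenaL(G)\ge\lceil n/2\rceil$. This uses $1<\Delta(G)$, which holds because $G$ is connected with $n\ge 3$. The substance is the upper bound, and for it I will show that for \emph{every} packing $P$ of $G$ (in particular a $\rho(G)$-set) the complement $D=V(G)-P$ is a $1$-limited dominating set.

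To check the $1$-limited constraint, take $u\in D$. Then $N(u)-D=N(u)\cap P$. Since $P$ is a packing, $|N[u]\cap P|\le 1$, so $u$ has at most one neighbour outside $D$.

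To check domination, take $v\in P$. We need a neighbour of $v$ in $D$. Since $G$ is connected with $n\ge 3$, $v$ has some neighbour $w$. If $w\in P$, then $v,w$ are two vertices of $P$ at distance $1$, contradicting the packing property. Hence $w\in D$, and $D$ is dominating. Therefore $\gammaenaL(G)\le |V(G)-P| = n-\rho(G)$.

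For the characterization, suppose first that $P$ is a $\rho(G)$-set whose complement is a $\gammaenaL(G)$-set. Then $\gammaenaL(G)=n-\rho(G)$ directly. Conversely, if $\gammaenaL(G)=n-\rho(G)$, take any $\rho(G)$-set $P$. By the argument above, $V(G)-P$ is a $1$-limited dominating set of size $n-\rho(G)=\gammaenaL(G)$, so it is a $\gammaenaL(G)$-set.

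I see no real obstacle here. The only points needing care are the degree argument showing that no vertex of $P$ is isolated from $D$, which uses connectivity and $n\ge 3$, and the observation that the ``only if'' direction holds for every $\rho(G)$-set, not just some particular one.
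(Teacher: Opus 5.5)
Your proof is correct and follows essentially the same route as the paper: the lower bound comes from the counting bound $n\le 2|D|$ of Theorem~\ref{bounds} with $k=1$, the upper bound from showing that the complement of a maximum packing is a $1$-limited dominating set, and the characterization from the same two-line argument. The paper also cites Ore's bound $\gamma(G)\le n/2$, but this only reduces the maximum in Theorem~\ref{bounds} to $\lceil n/2\rceil$, so deriving the lower bound directly loses nothing.
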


\begin{proof}
Let $G$ be a connected graph of order $n\geq 3$.
Using Theorem~\ref{bounds}, and Ore's classical bound $\gamma(G)\le \frac n2$ for isolate-free graphs~\cite{ore}, we derive that $\gammaenaL(G)\geq  \lceil \frac{n}{2} \rceil$. 

To prove the upper bound, let $P$ be a maximum packing of $G$, i.e.~$|P|=\rho(G)$, and let $D$ be its complement in $G$, that is $D=V(G)-P$. Let $u$ be an arbitrary vertex in $P=V(G)-D$. Clearly $u$ has no neighbor in $P$, thus it has to be adjacent to a vertex in $D$, so $D$ is a dominating set. Moreover, the 
$1$-limited constraint holds as $P$ is a packing.
Thus $D$ is a  $1$-limited dominating set, and we have
$\gammaenaL(G)\leq |D|=n-|P|= n-\rho(G)$.

To characterize graphs attaining the upper bound,  
let first $P$ be a $\rho(G)$-set and let $V(G)-P$ be a $\gammaenaL(G)$-set. By the reasoning above, the equality $\gammaenaL(G)=n-\rho(G)$ immediately follows.

Conversely, let $G$ be a graph with $\gammaenaL(G)=n-\rho(G)$.
Assume $P$ is a $\rho(G)$-set. Then, by the claim above, $D=V(G)-P$ is a $1$-limited dominating set, and we have 
$n-\rho(G)=\gammaenaL(G)\leq |D|=n-|P|$. Thus $\gammaenaL(G)=|D|$, i.e.~$V(G)-P$ is a $\gammaenaL(G)$-set.
\end{proof} 

The lower bound in Theorem~\ref{packing} is clearly attained precisely for graphs that admit a $1$-limited dominating set $D$ with $|D|=\lceil \frac{n}{2} \rceil$.
Their structure can be described in more detail. To do so, we use the notation $G[D,S]$ for 
the bipartite spanning subgraph of $G$ with nonempty partite sets $D$ and $S$, 
whose edge set 
consists exactly of all edges of $G$ with one endvertex in $D$ and the other in $S$.

\begin{corollary}\label{cor-1}
Let $G$ be a connected graph of order $n\ge 3$. 
\begin{enumerate}
\item[(i)] If $n$ is even, then $\gammaenaL(G)=\frac n2$ if and only if there exist a partition $(D,S)$ of $V(G)$, such that $G[D,S]$ is a disjoint union of $\frac n2$ copies of $K_2$.
\item[(ii)] If $n$ is odd, then $\gammaenaL(G)=\lceil \frac{n}{2} \rceil$ if and only if there exist a partition $(D,S)$ of $V(G)$ with $|D|=\lceil \frac{n}{2} \rceil$, such that  $G[D,S]$ is either a disjoint union of $\lfloor \frac{n}{2} \rfloor$ copies of $K_2$ and one isolated vertex lying in $D$, or a disjoint union of $\lfloor \frac{n}{2} \rfloor -1$ copies of $K_2$ and a path $P_3$ with the central vertex lying in $S$.
\end{enumerate}
\end{corollary}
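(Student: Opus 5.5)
The plan is to recast the statement as a counting argument. Fix a $1$-limited dominating set $D$ and put $S=V(G)-D$. In $G[D,S]$ every vertex of $D$ has degree at most $1$, by the $1$-limited constraint. Every vertex of $S$ has degree at least $1$, because $D$ dominates. Double counting the edges of $G[D,S]$ gives
$$|S|\le \sum_{s\in S}\deg_{G[D,S]}(s)=|E(G[D,S])|=\sum_{u\in D}\deg_{G[D,S]}(u)\le |D|.$$
This recovers the lower bound of Theorem~\ref{packing}. More importantly, it shows exactly how much slack an optimal set can have. Write $|D|-|S|=\varepsilon$, with $\varepsilon=0$ if $n$ is even and $\varepsilon=1$ if $n$ is odd (when $|D|=\lceil n/2\rceil$). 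The total slack in the two inequalities is then
$$\sum_{s\in S}\bigl(\deg(s)-1\bigr)+\#\{u\in D:\deg(u)=0\}=\varepsilon,$$
where all degrees are taken in $G[D,S]$.

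For the forward direction of (i), take a $\gammaenaL(G)$-set $D$ with $|D|=n/2$. Then $\varepsilon=0$. So every vertex of $S$ has exactly one neighbour in $D$, and every vertex of $D$ has exactly one neighbour in $S$. Hence $G[D,S]$ is a perfect matching, that is, $\frac n2$ copies of $K_2$.

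For the forward direction of (ii), $\varepsilon=1$, so exactly one of two things happens.
\begin{enumerate}
\item[(a)] One vertex of $D$ is isolated in $G[D,S]$, and all other degrees equal $1$. This gives $\lfloor n/2\rfloor$ copies of $K_2$ plus an isolated vertex in $D$.
\item[(b)] One vertex $s\in S$ has degree $2$, and all other degrees equal $1$. Its two $D$-neighbours have degree $1$, so they form a $P_3$ with centre $s$. The remaining vertices pair off into $\lfloor n/2\rfloor-1$ copies of $K_2$.
\end{enumerate}
A quick vertex count confirms the numbers: in (b), $3+2(\lfloor n/2\rfloor-1)=n$.

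For the converse, suppose a partition $(D,S)$ of one of the described shapes exists. Every vertex of $S$ then has a neighbour in $D$, and every vertex of $D$ has at most one neighbour in $S$. Here I use that $G[D,S]$ contains all $D$–$S$ edges of $G$, so these are degrees in $G$ towards the complement. Hence $D$ is a $1$-limited dominating set with $|D|=\lceil n/2\rceil$. In (i) this size is automatic, since $|D|=|S|$ by the matching. Together with the lower bound $\lceil n/2\rceil\le\gammaenaL(G)$ from Theorem~\ref{packing}, this gives equality.

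The only delicate step is the bookkeeping of the slack in the odd case. I have to make sure that the two inequalities' deficits add up to exactly $1$. I also have to rule out mixtures, for instance a vertex of $S$ of degree $3$, or an isolated vertex in $D$ occurring together with a degree-$2$ vertex in $S$. The displayed equation settles all of these at once.
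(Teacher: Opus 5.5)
Your argument is correct and is essentially the paper's proof. Both rest on counting the edges of $G[D,S]$ under the constraints that every vertex of $S$ has at least one and every vertex of $D$ at most one neighbour across the partition, with the converse obtained from the lower bound of Theorem~\ref{packing}. Your slack identity is a cleaner form of the paper's pigeonhole step in the odd case, and it explicitly excludes a vertex of $S$ with three $D$-neighbours and the mixed case, which the paper leaves implicit.
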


\begin{proof} Let $G$ be a connected graph  of order at least $3$ with $\gammaenaL(G)= \lceil \frac{n}{2} \rceil$, and let $D$ be a $\gammaenaL(G)$-set. Then the cardinality of $S=V(G)-D$ is clearly $\lfloor \frac{n}{2}\rfloor$ and $(D,S)$ is a partition of $V(G)$. Since $D$ is a $1$-limited dominating set, every vertex of $S$ has at least one neighbor in $D$, and each vertex of $D$ has at most one neighbor in $S$. If $n$ is even, this readily implies that $G[D,S]$ consists of $\frac{n}{2}$ independent edges. In the case when $n$ is odd, suppose that there exist two vertices in $S$ having at least two neighbors in $D$. But this implies there is a vertex in $D$ with two neighbors in $S$, a contradiction. Thus with the exception of one vertex in $S$, say $v$, every vertex has exactly one neighbor in $D$, and $v$ either has one or two neighbors in $D$, implying the desired structure of $G[D,S]$ also when $n$ is odd.

Now we prove the converse implications. In each of the cases  with respect to the given structure of $G[D,S]$ (one case if $n$ is even, and two cases if $n$ is odd), one can immediately see that $D$ is a $1$-limited dominating set with $|D|=\lceil \frac{n}{2} \rceil$. Thus Theorem~\ref{packing} implies that $\gammaenaL(G)=\lceil \frac{n}{2} \rceil$.
\end{proof}

Having characterized the graphs that attain the lower bound 
$\gammaenaL(G)=\lceil \frac n2 \rceil$, we next turn to the opposite 
extremal situation in which the upper bound 
$\gammaenaL(G)=n-\rho(G)$ is attained.  
By Theorem~\ref{packing}, this happens precisely when a maximum packing 
$P$ of $G$ has the property that its complement $V(G)-P$ is a 
$1$-limited dominating set.  
This condition significantly restricts the possible neighborhood 
structure between $P$ and the rest of the graph.

This becomes particularly restrictive when the packing number assumes its 
largest possible value, namely $\rho(G)=\lfloor \frac n2 \rfloor$.  
In the case of even order, $\rho(G)=\frac n2 $ forces an especially rigid 
structure.  
A result of Mojdeh et al.~\cite{packing} shows that the connected 
graphs with $\rho(G)=\frac n2 $ are exactly the \textit{corona graphs} 
$H\odot K_1$ obtained from a connected graph $H$ of order $\frac n2$ by 
attaching one pendant vertex to each of its vertices.  
Combining this structural description with Theorem~\ref{packing} yields 
a complete characterization of all connected graphs of even order 
for which the bounds 
$\left\lceil\frac n2\right\rceil \le \gamma^{\mathrm L}_1(G) \le n-\rho(G)$
coincide. This is summarized in the next corollary.

\begin{corollary}\label{n-sod}
Let $G$ be a connected graph of even order $n\ge 4$. Then the following
statements are equivalent:
\begin{enumerate}
\item[(i)] $\rho(G)=\frac n2$,
\item[(ii)] $\gammaenaL(G)=\rho(G)$,
\item[(iii)] $G$ is a corona graph.
\end{enumerate}
\end{corollary}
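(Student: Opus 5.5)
The plan is to prove the cycle (i)$\Rightarrow$(ii)$\Rightarrow$(i) using only the two-sided estimate of Theorem~\ref{packing} together with the general bound $\rho(G)\le\lfloor\frac n2\rfloor$. The equivalence (i)$\Leftrightarrow$(iii) will then be imported from the characterization of Mojdeh et al.~\cite{packing}, supplemented by a direct check of the easy direction.

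For (i)$\Rightarrow$(ii), assume $\rho(G)=\frac n2$. Since $n$ is even, Theorem~\ref{packing} gives
$$\frac n2=\Big\lceil\frac n2\Big\rceil\le\gammaenaL(G)\le n-\rho(G)=\frac n2 .$$
Hence $\gammaenaL(G)=\frac n2=\rho(G)$.

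For (ii)$\Rightarrow$(i), assume $\gammaenaL(G)=\rho(G)$. Combining the lower bound of Theorem~\ref{packing} with $\rho(G)\le\lfloor\frac n2\rfloor=\frac n2$ yields the chain
$$\rho(G)\le\frac n2\le\gammaenaL(G)=\rho(G).$$
Every inequality in this chain is therefore an equality, and in particular $\rho(G)=\frac n2$.

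For (iii)$\Rightarrow$(i), let $G=H\odot K_1$ with $H$ connected of order $\frac n2\ge 2$. Let $P$ be the set of the $\frac n2$ pendant vertices. Two distinct pendant vertices have distinct supports, so any path between them passes through both supports. Their distance is therefore at least $3$, and $P$ is a packing. This gives $\rho(G)\ge\frac n2$, and the reverse inequality is the general bound, so $\rho(G)=\frac n2$.

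For (i)$\Rightarrow$(iii), I would invoke directly the result of Mojdeh et al.~\cite{packing}: a connected graph of even order has $\rho(G)=\frac n2$ exactly when it is a corona graph. This direction is the only nontrivial step, and it is handled entirely by the cited theorem.

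Should a self-contained argument be preferred, I would use the fact that the closed neighborhoods of a packing of size $\frac n2$ are pairwise disjoint and each has at least $2$ vertices, since $G$ is connected with $n\ge 4$. They therefore partition $V(G)$ into $\frac n2$ sets of size exactly $2$. Thus each packing vertex $p$ has degree $1$, and its unique neighbor $h_p$ lies outside the packing. The vertices $h_p$ are pairwise distinct, because the closed neighborhoods $N[p]$ are disjoint. It follows that every vertex outside the packing is some $h_p$. Consequently $G$ is obtained from the subgraph $H$ induced by the vertices $h_p$ by attaching one pendant vertex to each of them. Finally, $H$ is connected because $G$ is connected and pendant vertices cannot lie on paths between other vertices.
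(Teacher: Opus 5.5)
Your proof is correct and follows the paper's route. You get (i)$\Leftrightarrow$(ii) from the two-sided estimate of Theorem~\ref{packing} together with $\rho(G)\le\frac n2$, and (i)$\Leftrightarrow$(iii) from the characterization of Mojdeh et al.~\cite{packing}. Your optional self-contained argument for (i)$\Rightarrow$(iii) is also valid: the $\frac n2$ pairwise disjoint closed neighborhoods, each of size at least $2$, must partition $V(G)$ into pairs, and this removes the need for the citation (the paper only remarks that this could be done via Theorem~\ref{packing}).
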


\begin{proof} The equivalence $(i)\Leftrightarrow(ii)$ is immediately derived from inequalities in 
Theorem~\ref{packing}, while the equivalence $(i)\Leftrightarrow(iii)$ 
follows from the characterization of graphs with $\rho(G)=\frac n2$ 
established in~\cite{packing}.
\end{proof}

In the above argumentation we have used the result of Mojdeh et al., although we could prove Corollary \ref{n-sod} also via sequence of implications $(i) \Rightarrow(ii) \Rightarrow(iii) \Rightarrow (i)$  by using just Theorem \ref{packing}. Such an approach also makes it possible to treat the case when 
$n$ is odd, which, to the best of our knowledge, has not yet been addressed in the literature. For this purpose we introduce the notion of a near-corona graph.

A connected graph $G$ of odd order $n$ is a \emph{near-corona graph} if it contains an independent set 
$P$ of size $\lfloor\frac n2 \rfloor$ such that the closed neighborhoods $N[v]$, 
$v \in P$, are pairwise disjoint, and at most one vertex of $P$ has degree $2$ 
while all the remaining vertices of $P$ are pendant.

\begin{corollary}\label{n-lih}
Let $G$ be a connected graph of odd order $n\ge 3$. Then the following
statements are equivalent:
\begin{enumerate}
\item[(i)] $\rho(G)=\lfloor\frac n2 \rfloor$,
\item[(ii)] $\gammaenaL(G)=\rho(G)+1$,
\item[(iii)] $G$ is a near-corona graph.
\end{enumerate}
\end{corollary}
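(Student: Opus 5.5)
The plan is to prove the cycle of implications $(i)\Rightarrow(ii)\Rightarrow(iii)\Rightarrow(i)$, using only Theorem~\ref{packing}, the general estimate $\rho(G)\le\lfloor\frac n2\rfloor$ for connected graphs, and the fact that $n$ odd gives $\lceil\frac n2\rceil=\lfloor\frac n2\rfloor+1$.

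For $(i)\Rightarrow(ii)$, I would substitute $\rho(G)=\lfloor\frac n2\rfloor$ into Theorem~\ref{packing}. This gives
$$\Big\lceil\frac n2\Big\rceil\le\gammaenaL(G)\le n-\Big\lfloor\frac n2\Big\rfloor=\Big\lceil\frac n2\Big\rceil.$$
Hence $\gammaenaL(G)=\lceil\frac n2\rceil=\rho(G)+1$.

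For $(ii)\Rightarrow(iii)$, I first recover $(i)$. The lower bound of Theorem~\ref{packing} gives $\rho(G)+1=\gammaenaL(G)\ge\lceil\frac n2\rceil=\lfloor\frac n2\rfloor+1$. Combined with $\rho(G)\le\lfloor\frac n2\rfloor$, this yields $\rho(G)=\lfloor\frac n2\rfloor$. Now let $P$ be a $\rho(G)$-set, so $|P|=\lfloor\frac n2\rfloor$. Since $P$ is a packing, it is independent and the sets $N[v]$, $v\in P$, are pairwise disjoint.

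It remains to verify the degree condition, which I expect to be the only step needing an actual argument. I would double count the edges between $P$ and $D=V(G)-P$. Because $P$ is a packing, every vertex of $D$ has at most one neighbor in $P$, so
$$\sum_{v\in P}\deg(v)\le|D|=\Big\lceil\frac n2\Big\rceil=|P|+1.$$
On the other hand, $G$ is connected with $n\ge3$, so every $v\in P$ has $\deg(v)\ge1$. Writing each degree as $1$ plus an excess, the total excess is at most $1$. Thus at most one vertex of $P$ has degree $2$, and all others are pendant, so $G$ is a near-corona graph.

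For $(iii)\Rightarrow(i)$, the set $P$ from the definition of a near-corona graph is independent and has pairwise disjoint closed neighborhoods. It is therefore a packing, which gives $\rho(G)\ge\lfloor\frac n2\rfloor$. Together with the general bound $\rho(G)\le\lfloor\frac n2\rfloor$, this gives equality and closes the cycle.
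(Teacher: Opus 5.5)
Your proposal is correct and follows the same cycle $(i)\Rightarrow(ii)\Rightarrow(iii)\Rightarrow(i)$ as the paper. It also uses Theorem~\ref{packing} and the bound $\rho(G)\le\lfloor\frac n2\rfloor$ in the same way. The paper compresses one step into ``the $1$-limited constraint further implies that $G$ has the structure of a near-corona graph''. You make that step explicit via the count $\sum_{v\in P}\deg(v)\le |V(G)-P|=|P|+1$, and you correctly note that it needs only the packing property of an arbitrary $\rho(G)$-set, not the characterization part of Theorem~\ref{packing}.
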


\begin{proof}
The implication $(i)\Rightarrow (ii)$ is a direct corollary of Theorem \ref{packing}. 

To prove $(ii)\Rightarrow (iii)$, let $\gammaenaL(G)=\rho(G)+1$. As $\rho(G)\leq \lfloor\frac n2 \rfloor$, we have $\gammaenaL(G)\leq \lfloor\frac n2 \rfloor +1 = \lceil\frac n2 \rceil$. Then, again  by Theorem \ref{packing}, we infer 
$\gammaenaL(G)=\lceil\frac n2 \rceil$ and $\rho(G)=\lfloor\frac n2 \rfloor$, meaning that $G$ is a graph whose structure satisfies the characterization of the upper bound in Theorem~\ref{packing}. Thus in $G$ there exists a $\rho(G)$-set $P$, such that its complement $V(G)-P$ is a $1$-limited dominating set. As $P$ is a packing, we already know that $P$ is an independent set. Since $V(G)-P$ (which contains $\lceil \frac n2 \rceil$ vertices) is a dominating set, every vertex in $P$ is adjacent to a vertex in 
$V(G)-P$, and the $1$-limited constraint further implies
that $G$ has the structure of a near-corona graph.

Finally, to see that $(iii)\Rightarrow (i)$ holds, let $G$ be a near-corona graph. 
Then it contains an independent set 
$P$ of size $\lfloor\frac n2 \rfloor$ such that the closed neighborhoods $N[v]$ where 
$v \in P$, are pairwise disjoint, which already means that $\rho(G)\geq \lfloor \frac n2 \rfloor$.
Since the opposite inequality holds in general, the result follows.
\end{proof}

In particular, Corollaries~\ref{n-sod} and~\ref{n-lih} together yield a complete characterization of graphs with $\rho(G)=\lfloor\frac n2 \rfloor$.

The latter represents one extreme where the packing number is as large as possible and, as we have seen, forces a highly constrained structure.  
At the opposite end of the spectrum lies the case $\rho(G)=1$, where the packing number is minimal. This condition imposes 
a very different structural behavior.  
We therefore next analyze graphs with $\rho(G)=1$, beginning with the simplest
subclass, namely graphs containing a \emph{universal vertex}, i.e.~a vertex adjacent to every other vertex of the graph.

\begin{lemma}\label{k=1:universal}
Let $G$ be a connected graph on $n\geq 3$ vertices containing a universal vertex. Then $\gammaenaL(G)=n-1$.  
\end{lemma}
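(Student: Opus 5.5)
The plan is to prove the two inequalities separately. For the upper bound I will use the packing number. If $v$ is a universal vertex, then every vertex lies in $N[v]$, so any two distinct vertices have intersecting closed neighborhoods and $\rho(G)=1$. Theorem~\ref{packing} then gives $\gammaenaL(G)\le n-\rho(G)=n-1$. One could also invoke Theorem~\ref{bounds} with $k=1$, since $\Delta(G)=n-1\ge 2>1$ for $n\ge 3$, which gives the same bound $n-1$.

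For the lower bound, let $D$ be a $\gammaenaL(G)$-set and let $v$ be a universal vertex. I will split into two cases according to whether $v\in D$.

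\begin{enumerate}
\item[(a)] \emph{Case $v\in D$.} Since $N(v)=V(G)-\{v\}$, we have $|N(v)-D|=|V(G)-D|$. The $1$-limited constraint forces this to be at most $1$, so $|D|\ge n-1$.
\item[(b)] \emph{Case $v\notin D$.} Then $v$ is the only vertex outside $D$ that matters. Suppose some other vertex $w\neq v$ were also outside $D$. Every vertex $u\in D$ is adjacent to $v$, because $v$ is universal. If $u$ is adjacent to $w$ as well, then $u$ has two neighbors outside $D$, which is forbidden. So no vertex of $D$ can dominate $w$, and $w$ is undominated, a contradiction. Hence $V(G)-D=\{v\}$ and $|D|=n-1$.
\end{enumerate}

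In both cases $|D|\ge n-1$, which together with the upper bound gives $\gammaenaL(G)=n-1$.

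The argument is elementary. The only delicate point is case (b), where one has to notice that dominating $w$ would force a dominator to have both $v$ and $w$ as external neighbors.
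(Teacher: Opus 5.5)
Your proof is correct and follows the paper's argument: the upper bound comes from $\rho(G)=1$ together with Theorem~\ref{packing} (or Theorem~\ref{bounds}), and the lower bound splits on whether the universal vertex lies in the $\gammaenaL(G)$-set, using the observation that any dominator of a second outside vertex would also have the universal vertex as an external neighbor. Your case (b) is slightly more direct than the paper's, which first introduces a dominator $x$ of the universal vertex that the argument does not actually need.
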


\begin{proof}
Let $G$ be a graph that satisfies the assumptions of the lemma. Since $G$ contains a universal vertex, we have $\rho(G)=1$. Thus by Theorem \ref{packing} (and also Theorem \ref{bounds}) we have $\gammaenaL(G)\leq n-1$.

To prove the opposite inequality, let $D \subseteq V(G)$ be a $\gammaenaL(G)$-set. 
First consider the case when a universal vertex $u$ lies in $D$. Then at most one of its neighbors can lie outside $D$, thus $|D| \geq n-1$. On the other hand, if $u \notin D$, there exists some $x \in D$ that dominates $u$. Suppose there is another vertex $y \notin D$. Clearly, $y$ cannot be adjacent to $x$, which means there must exist $z \in D$ that dominates $y$. However, since $u$ is universal, $z$ has at least two neighbors outside $D$, a contradiction. Hence, in this case only $u$ can lie outside $D$, implying that $|D| = n-1$.
\end{proof}

The converse of Lemma~\ref{k=1:universal} does not hold. For instance, the graph shown in Figure~\ref{S2} satisfies $\gamma_1^{\mathrm L}(G)=5=n-1$, yet it has no universal vertex.

\begin{figure}[H]
\begin{center}
\begin{tikzpicture}[scale=0.4]
			
\node [My Style, name=a]   at (0,0) {};		
\node [My Style, name=b]   at (5,0) {};	
\node [My Style, name=c]    at (2.5,5) {};	
\node [My Style2, name=x1]   at (2.5,0) {};	
\node [My Style, name=x2]   at (3.75,2.5) {};	
\node [My Style, name=x3]   at (1.25,2.5) {};	
		
\draw[thick] (a)--(x1)--(b)--(x2)--(c)--(x3)--(a);
\draw[thick] (x1)--(x2)--(x3)--(x1);




\end{tikzpicture}
\end{center}
\caption{A graph $G$ with the black vertices forming a $\gammaenaL(G)$-set.}
\label{S2}
\end{figure}

This example also suggests that, in order for a graph $G$ to satisfy 
$\gamma^{\mathrm L}_1(G) = n-1$, its diameter must be small. 
By Theorem~\ref{packing} we have $\gamma^{\mathrm L}_1(G)\le n-\rho(G)$, 
so the condition $\gamma^{\mathrm L}_1(G)=n-1$ forces $\rho(G)=1$. 
Moreover, it is known that for a graph $G$ of order $n$ we have 
$\rho(G)=1$ if and only if $\diam(G)\le 2$, see \cite{klimited}. 
Combining these facts yields the following corollary.

\begin{corollary}\label{diameter2}
Let $G$ be a connected graph of order $n\geq 3$. If $\gammaenaL(G)=n-1$, then $\diam(G)\leq 2$.
\end{corollary}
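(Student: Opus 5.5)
The corollary follows by chaining two facts that are already available, so the argument is short. Assume $\gammaenaL(G)=n-1$ for a connected graph $G$ of order $n\ge 3$.

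The first step is to apply the upper bound of Theorem~\ref{packing}, namely $\gammaenaL(G)\le n-\rho(G)$. Together with the assumption this gives
$$n-1=\gammaenaL(G)\le n-\rho(G),$$
so $\rho(G)\le 1$. Since any single vertex is a packing, $\rho(G)\ge 1$, and therefore $\rho(G)=1$.

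The second step is to convert $\rho(G)=1$ into the diameter condition. One can cite the known equivalence ``$\rho(G)=1$ if and only if $\diam(G)\le 2$'' from~\cite{klimited}. Only the direction ``$\rho(G)=1$ implies $\diam(G)\le 2$'' is needed, and it has a one-line contrapositive proof that I would include. Suppose $\diam(G)\ge 3$, and choose $u,v$ with $d(u,v)\ge 3$. Then $N[u]\cap N[v]=\emptyset$, because a common element would give a $u$--$v$ path of length at most $2$. Hence $\{u,v\}$ is a packing, so $\rho(G)\ge 2$, a contradiction.

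Neither step presents a real obstacle. The only point needing care is the direction of the implication used in the second step, and the contrapositive argument above settles it without relying on the external reference.
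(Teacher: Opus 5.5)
Your proposal is correct and follows the paper's argument. The paper likewise uses the bound $\gammaenaL(G)\le n-\rho(G)$ from Theorem~\ref{packing} to force $\rho(G)=1$, and then cites the equivalence $\rho(G)=1 \iff \diam(G)\le 2$; your short contrapositive proof of the one direction needed makes the argument self-contained.
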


Before presenting the next result, leading to a characterization of graphs with $\gammaenaL(G)=n-1$, we recall that a \emph{pendant edge} is an edge with one endvertex of degree $1$. An edge that is not pendant will be referred to as a \emph{non-pendant edge}. 

\medskip

\begin{lemma}\label{pendant}
Let $G$ be a connected graph on $n\geq 3$ vertices. 
If $\gamma^{\mathrm L}_1(G)=n-1$, then $\mathrm{diam}(G)\le 2$,
and every non-pendant edge of $G$ lies in a triangle or $G$ contains a universal vertex.
\end{lemma}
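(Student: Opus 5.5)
The diameter statement is just Corollary~\ref{diameter2}, so the work lies in the edge condition. I will argue by contraposition. Assume $\gammaenaL(G)=n-1$, that $G$ has no universal vertex, and that some non-pendant edge $uv$ lies in no triangle. I will then construct a $1$-limited dominating set of size $n-2$, contradicting the assumption.

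Because $uv$ is non-pendant, both $u$ and $v$ have degree at least $2$. Because $uv$ lies in no triangle, $N(u)\cap N(v)=\emptyset$. The first candidate I would try is $D=V(G)-\{u,v\}$. To check domination, note that $u$ has a neighbor $u'\neq v$, and $u'\in D$; symmetrically $v$ has a neighbor $v'\ne u$ in $D$. For the $1$-limited constraint, every $w\in D$ satisfies $N(w)-D\subseteq\{u,v\}$, and $w$ cannot be adjacent to both $u$ and $v$, since then $uvw$ would be a triangle. So $|N(w)-D|\le 1$ for every $w\in D$. Hence $D$ is a $1$-limited dominating set with $|D|=n-2$, giving $\gammaenaL(G)\le n-2$, a contradiction.

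This argument does not use the absence of a universal vertex at all. So the real content is that under $\gammaenaL(G)=n-1$, every edge with both endpoints of degree at least $2$ lies in a triangle, and the disjunction ``or $G$ contains a universal vertex'' is presumably stated for use in a later characterization. The main step to double-check is the definition of a non-pendant edge. It means neither endvertex has degree $1$, and this is exactly what guarantees that $u$ and $v$ each have a dominator in $D$ other than each other. If the intended meaning were weaker, the proof would need a separate case, and there I would use $\diam(G)\le2$.
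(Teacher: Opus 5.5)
Your proof is correct and is essentially the paper's argument: the diameter bound comes from Corollary~\ref{diameter2}, and for a non-pendant edge $uv$ lying in no triangle, $V(G)-\{u,v\}$ is a $1$-limited dominating set of size $n-2$. Your remark that the absence of a universal vertex is never used applies equally to the paper's proof. In fact, a graph with a universal vertex already has every non-pendant edge in a triangle, so that disjunction is harmless but redundant.
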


\begin{proof}
Suppose that $\gammaenaL(G)=n-1$. 
By Corollary~\ref{diameter2} we have $\diam(G)\leq 2$.
Assume that $G$ has no universal vertex. We will prove that every non-pendant edge of $G$ is contained in a triangle. Suppose, to the contrary, that there exists a non-pendant edge $uv \in E(G)$ that does not belong to any triangle. Then $u$ and $v$ both have at least one neighbor in the set $D_{u,v}=V(G)-\{u,v\}$ which is thus clearly a dominating set of $G$. Moreover, since $uv$ does not lie in a triangle, no vertex of $G$ is adjacent to both $u$ and $v$. Hence $D_{u,v}$ is a $1$-limited dominating set of $G$. This yields $\gammaenaL(G) \leq |D_{u,v}| = n - 2$, a contradiction. Therefore, every non-pendant edge of $G$ must lie in a triangle, or $G$ contains a universal vertex.
\end{proof}

Despite the detailed conditions of Lemma \ref{pendant}, the converse statement is again not true. The graph $H$ shown in Figure \ref{rozica} has diameter $2$, and all its edges (none of them pendant) lie in a triangle, yet $\gammaenaL(G)\leq n-3$.

\begin{figure}[H]
\vspace{-1cm} 
\begin{center}
\begin{tikzpicture}[scale=0.25]
            
\node [My Style2, name=a]   at (0,1) {};		
\node [My Style2, name=b]   at (5,1) {};	
\node [My Style2, name=c]    at (2.5,4.5) {};	
\node [My Style, name=x1]   at (-2.2,2.6) {};	
\node [My Style, name=x2]   at (0,-2) {};	
\node [My Style, name=y1]   at (7.2,2.6) {};	
\node [My Style, name=y2]   at (5,-2) {};
\node [My Style, name=z1]   at (0.1,6.2) {};		
\node [My Style, name=z2]   at (4.9,6.2) {};	
		
\draw[thick] (a)--(b)--(c)--(a);
\draw[thick] (a)--(x1)--(x2)--(a);
\draw[thick] (b)--(y1)--(y2)--(b);
\draw[thick] (c)--(z1)--(z2)--(c);
\draw[thick] (z1)--(x1);
\draw[thick] (x2)--(y2);
\draw[thick] (y1)--(z2);
\draw[thick] (z2) to[out=120, in=110, looseness=1.8] (x1);
\draw[thick] (x1) to[out=250, in=230, looseness=1.8] (y2);
\draw[thick] (y2) to[out=10, in=350, looseness=1.8] (z2);

\end{tikzpicture}
\end{center}
\vspace{-1cm} 
\caption{The black vertices represent a $1$-limited dominating set $D$ of the given graph $H$, with $|D|=n-3$.}
\label{rozica}
\end{figure}

Recall that for a graph $G$, a set $S \subseteq V(G)$ and $s \in S$, the set
$$\epn(s,S)=\{v \in V(G)-S; N(v) \cap S = \{s\}\}
$$
is called the \textit{$S$-external private neighborhood} of $s$ (with respect to $S$), \cite{HHH1}.
One may observe that in the graph $H$ from Figure~\ref{rozica}, 
the set of white vertices 
$S$ has the property that for each $s\in S$, $\epn(s,S)\ne \emptyset$.
As we shall see in the next theorem, the existence of such a set $S$ of 
cardinality at least $3$ is one of the key structural reasons why 
$\gammaenaL(G)$ drops below $n-1$.

\begin{theorem}\label{zg-kar}
Let $G$ be a connected graph on $n\geq 3$ vertices. Then $\gammaenaL(G)=n-1$ if and only if
\begin{enumerate}
\item[(i)] $\mathrm{diam}(G)\leq 2$,
\item[(ii)] every non-pendant edge of $G$ lies in a triangle, or $G$ has a universal vertex, and
\item[(iii)] $G$ contains no subset $S \subseteq V(G)$ with $|S|\ge 3$
such that $\epn(s,S)\neq\emptyset$ for every $s\in S$, and $N(S)=\displaystyle \bigcup _{s\in S}\epn(s,S)$.
\end{enumerate}
\end{theorem}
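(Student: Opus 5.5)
The whole argument rests on reformulating $1$-limited domination through the complement of the dominating set. For $S\subseteq V(G)$ put $D=V(G)-S$. First I will check the following equivalence: $D$ is a $1$-limited dominating set if and only if
\begin{itemize}
\item[(a)] every vertex of $N(S)$ has exactly one neighbor in $S$, i.e.\ $N(S)=\bigcup_{s\in S}\epn(s,S)$, and
\item[(b)] every $s\in S$ has a neighbor outside $S$.
\end{itemize}
Indeed, the $1$-limited constraint says that each vertex of $D$ has at most one neighbor in $S$, and domination says that each $s\in S$ has a neighbor in $D$. Given (a), any neighbor of $s$ outside $S$ lies in $\epn(s,S)$. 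Hence (b) is equivalent to $\epn(s,S)\neq\emptyset$ for every $s\in S$.

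Consequently, $\gammaenaL(G)\le n-2$ if and only if $G$ has a set $S$ with $|S|\ge 2$, $\epn(s,S)\ne\emptyset$ for all $s\in S$, and $N(S)=\bigcup_{s\in S}\epn(s,S)$. Call such a set \emph{good}. Since $\gammaenaL(G)\le n-\rho(G)\le n-1$ by Theorem~\ref{packing}, the theorem reduces to showing that $G$ has no good set if and only if (i)--(iii) hold.

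For the forward direction, assume $\gammaenaL(G)=n-1$. Conditions (i) and (ii) are exactly Lemma~\ref{pendant}. For (iii), a set $S$ with $|S|\ge 3$ as described there would be good. Then $V(G)-S$ would be a $1$-limited dominating set of size at most $n-3$, a contradiction.

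For the converse, assume (i)--(iii) and suppose a good set $S$ exists. By (iii), $|S|=2$, say $S=\{u,v\}$, and the good-set conditions say that $u$ and $v$ have no common neighbor and each has a neighbor outside $S$.
\begin{itemize}
\item If $uv\notin E(G)$, then having no common neighbor means $d(u,v)\ge 3$, which contradicts (i).
\item If $uv\in E(G)$, then $uv$ is a non-pendant edge, because each endpoint has a further neighbor, and it lies in no triangle. By (ii), $G$ must then have a universal vertex $w$.
\end{itemize}
In the universal-vertex case I can simply invoke Lemma~\ref{k=1:universal}. Alternatively, a universal vertex rules out every good set directly:
\begin{itemize}
\item if $w\in S$, then a vertex of $\epn(s,S)$ for another $s\in S$ is also adjacent to $w$, which is impossible;
\item if $w\notin S$, then $w\in N(S)$ has at least two neighbors in $S$, violating (a).
\end{itemize}

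The main care point is the equivalence between good sets $S$ and $1$-limited dominating sets $V(G)-S$ with $|S|\ge 2$, including the observation that (a) makes condition (b) the same as nonempty private neighborhoods. The other delicate point is the handling of the ``or universal vertex'' clause in (ii). Both steps are short once stated, so I expect no serious obstacle.
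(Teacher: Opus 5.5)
Your proposal is correct and follows essentially the paper's proof. The forward direction is identical. In the converse you only reorder the same argument: you use (iii) to force $|S|=2$ and then refute a two-element set with (i) and (ii), using the fact that a common neighbor of two vertices of $S$ would have two neighbors in $S$. The paper instead uses (i) and (ii) to produce a third vertex $s_3\in S$ and then contradicts (iii). Treating the universal-vertex case at the end rather than up front via Lemma~\ref{k=1:universal} is likewise only a cosmetic difference.
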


\begin{proof}
Suppose first that $\gammaenaL(G)=n-1$. 
Then conditions $(i)$ and $(ii)$ follow from Lemma \ref{pendant}. To prove condition $(iii)$, suppose to the contrary, that $G$ contains a subset $S \subseteq V(G)$ with $|S|\ge 3$
such that $\epn(s,S)\neq\emptyset$ for every $s\in S$, and $N(S)= \bigcup _{s\in S}\epn(s,S)$.
Clearly, $\epn(s,S)\cap \epn(t,S)=\emptyset$ for any different vertices $s$ and $t$ from $S$. Thus each vertex in $N(S)$ is adjacent to precisely one vertex in $S$. 
Now, let $D=V(G)-S$.
Since $\epn(s,S)\neq\emptyset$ for every $s\in S$, $s$ has a neighbor in $D$, and so $D$ is a dominating set. Moreover, each vertex $d\in D$, that has a neighbor in $S$, lies in $N(S)$ and therefore
has exactly one neighbor in $S=V(G)-D$. 
Thus $D$ is a $1$-limited dominating set, and $\gammaenaL(G)\leq |D|=n-|S|\leq n-3$, a contradiction.

To prove the converse, assume that $G$ satisfies conditions $(i)-(iii)$, and let $D$ be a $\gammaenaL(G)$-set. 
If $G$ has a universal vertex, the claim follows from Lemma \ref{k=1:universal}. Hence in the sequel we assume, that $G$ has no universal vertex. 

Suppose that $|D|\leq n-2$, and let $S=V(G)-D$.
Then there exist distinct vertices $s_1, s_2\in S$. Since $\text{diam}(G)\leq 2$, either $s_1s_2\in E(G)$ or $d(s_1,s_2)=2$.  
First, let $s_1s_2\in E(G)$. If $\deg(s_i)=1$ for some $i\in\{1,2\}$, then $s_i$ does not have a neighbor in $D$, a contradiction. Hence, $s_1s_2$ is a non-pendant edge, and by the condition $(ii)$ there exists a vertex $s_3\in V(G)$ adjacent to both $s_1$ and $s_2$. Clearly $s_3\notin D$, since otherwise $s_3$ would dominate two vertices outside $D$, violating the $1$-limited constraint. 
In the second case, when $d(s_1,s_2)=2$, there exists $s_3\in V(G)$ such that $s_1s_3s_2$ is a path of length $2$, immediately implying that $s_3\in S$. Thus $|S|\geq 3$. 

Since $D$ is a dominating set,
every vertex $s\in S$ has at least one neighbor in $D$, say  $d$, and since $D$ is $1$-limited, $d$ has at most one neighbor
in $S$. Thus $N(d)\cap S=\{s\}$ and so $d\in \epn(s,S)$, implying $\epn(s,S)\neq\emptyset$
for every $s\in S$.
Moreover, it is straightforward to see that the sets 
$N(S)=\{d\in D ; N(d)\cap S\neq\emptyset\}$ and $ \bigcup_{s\in S}\epn(s,S)$ are equal.
But since $|S|\ge 3$, we now have a contradiction with condition $(iii)$.

Consequently, $|D| \ge n - 1$, while the reverse inequality is guaranteed by Theorem~\ref{bounds}, yielding $\gamma^{\mathrm L}_1(G) = n - 1$.
\end{proof}

We now apply Theorem~\ref{zg-kar} to the class of trees. Since a tree contains no cycle, no edge can lie in a triangle. Therefore, condition $(ii)$ of the theorem implies that if a tree $T$ satisfies $\gammaenaL(T)=n-1$, then $T$ must contain a universal vertex. But the only tree with a universal vertex is a star graph. Conversely, if $T$ is a star, then $\gammaenaL(T)=n-1$ follows by Proposition \ref{families}. We have therefore shown the following.

\begin{corollary}\label{k=1:star}
Let $T$ be a tree on $n\geq 3$ vertices. Then $\gammaenaL(T)=n-1$ if and only if $T$ is a star graph.  
\end{corollary}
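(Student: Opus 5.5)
The plan is to prove the two directions separately, taking Theorem~\ref{zg-kar} as the main tool for the nontrivial direction and Proposition~\ref{families} for the easy one.

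For the \emph{sufficiency} direction, let $T$ be a star on $n\ge 3$ vertices. Proposition~\ref{families}(ii) is stated only for $n\ge 4$, so I will argue directly, or treat $n=3$ separately. The centre of a star is a universal vertex, so Lemma~\ref{k=1:universal} immediately gives $\gammaenaL(T)=n-1$ for every $n\ge 3$. This also covers $T=P_3=S_3$.

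For the \emph{necessity} direction, assume $\gammaenaL(T)=n-1$ and apply Theorem~\ref{zg-kar}, or more economically Lemma~\ref{pendant}. Either way we obtain $\diam(T)\le 2$, and also that every non-pendant edge of $T$ lies in a triangle or $T$ has a universal vertex. A tree contains no triangle, so there are two cases:
\begin{enumerate}
\item $T$ has a universal vertex. Then $T$ is a star, because a universal vertex $u$ of a tree is adjacent to all other vertices, and any further edge would create a triangle.
\item $T$ has no non-pendant edges. Then every edge has a leaf endpoint, and a connected tree with this property and $n\ge 3$ is a star.
\end{enumerate}
Alternatively, $\diam(T)\le 2$ alone already forces a tree to be a star: any tree of diameter at most $2$ with $n\ge 3$ has a central vertex adjacent to all others. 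This is the cleanest route, and I would present it as the main argument, citing Corollary~\ref{diameter2}.

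There is no real obstacle here. The only points needing care are the small-order edge case ($n=3$, where Proposition~\ref{families}(ii) does not apply, hence the appeal to Lemma~\ref{k=1:universal}) and the elementary justification that a tree of diameter at most $2$ is a star. For the latter: take a longest path, of length at most $2$. If its length is $1$ then $n=2$, which is excluded. So it has a middle vertex $c$, and every other vertex lies within distance $2$ of both ends; since $T$ is acyclic, every other vertex must be adjacent to $c$.
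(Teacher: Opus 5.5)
Your proof is correct. For necessity, though, your main argument differs from the paper's.

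\textbf{Necessity.} The paper applies condition~(ii) of Theorem~\ref{zg-kar}: a tree has no triangles, so $T$ must have a universal vertex and hence is a star. You instead use only $\diam(T)\le 2$ from Corollary~\ref{diameter2}, plus the elementary fact that a tree of diameter at most $2$ on $n\ge 3$ vertices is a star.

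\begin{itemize}
\item Your route needs less machinery. Corollary~\ref{diameter2} rests only on Theorem~\ref{packing} and the equivalence $\rho(G)=1\iff\diam(G)\le 2$. It is itself one of the inputs to Lemma~\ref{pendant}, and hence to Theorem~\ref{zg-kar}.
\item It also sidesteps a small gap in the triangle route as the paper words it. When every edge of $T$ is pendant, the first alternative of~(ii) holds vacuously, so~(ii) alone does not produce a universal vertex. You need your case~2 observation that such a tree is a star anyway; the paper leaves this implicit.
\end{itemize}

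\textbf{Sufficiency.} Citing Lemma~\ref{k=1:universal} handles all $n\ge 3$ at once. The paper's appeal to Proposition~\ref{families} reaches $S_3=P_3$ only through part~(i), since part~(ii) assumes $n\ge 4$.

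\textbf{Diameter-two trees.} Your sketch that diameter at most $2$ forces a star is terse but sound. Suppose some $v\notin\{a,b,c\}$ were not adjacent to the middle vertex $c$ of a longest path $acb$. Then the unique $v$--$c$ path enters $c$ through some neighbor $w$. Extending it to a vertex of $\{a,b\}-\{w\}$ gives a path of length at least $3$. In a tree paths are geodesics, so this contradicts $\diam(T)\le 2$.
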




\section{Conclusion}

In this paper we have introduced and studied the concept of \emph{$k$-limited domination}, motivated by applications where dominating vertices have limited capacity and cannot be overloaded by too many external neighbors, as well  as theoretical considerations that extend the scope 
of classical domination theory.

We established tight general bounds for $\gamma^{\mathrm L}_k(G)$ and analyzed their 
extremal behavior. For the special case $k=1$, we uncovered a Gallai-type relation 
$\gamma^{\mathrm L}_1(G)+\rho(G)\le n$, linking $1$-limited domination to the packing number. Examining the equality cases led to a complete structural 
characterization of connected graphs satisfying $\gamma^{\mathrm L}_1(G)=n-1$.
More importantly, the connection between these two concepts has enabled us to fully characterize the graphs that attain the upper bound $\lfloor \frac{n}{2} \rfloor$ for the packing number $\rho(G)$, which until now was known only for graphs with an even number of vertices.

A further contribution is the connection between $(1,t)$-domination and $k$-limited domination. 
For $d$-regular graphs we proved the identity
$\gamma^{\mathrm L}_k(G)=\gamma_{1,d-k}(G)$,
which suggests that 
$k$-limited domination may offer new insights on
$(1,t)$-domination, including open problems in~\cite{trilobiti} and~\cite{fakhran}.
In addition, for some basic families of graphs (paths, cycles, stars, complete graphs and complete bipartite graphs) we derived closed formulae for $\gamma^L_k(G)$ for all interesting values of $k$. 

The results obtained in the paper open a wide range of further questions. A natural direction 
is to characterize graphs attaining the bounds in Theorem~\ref{bounds}.
In an attempt to improve the upper bound, in Proposition~\ref{Delta-v-zg} we have considered 
a special degree condition that turned out to be essential
for the constructive proof:
when augmenting a $\gamma$-set $D$ with the neighbors of a vertex $u\in D$ to enforce the
$k$-limited constraint, newly added vertices of degree greater than $k+1$ may themselves violate
the constraint, potentially triggering a cascade of additions that could exceed $\Delta(G)-k$
per dominator. The assumption $\deg(w)\le k+1$ for the selected neighbors prevents
such cascades and ensures the bound 
$\gammakL(G) \le (\Delta(G)-k+1)\,\gamma(G)$.
Whether the bound is valid in full generality, without the degree condition, remains an interesting open question.

Determining 
$\gamma^{\mathrm L}_k(G)$ for additional graph families, exploring the algorithmic and 
complexity aspects of the parameter, and investigating its behavior in various graph
products are among the natural topics for further study.

Given its structural depth, its interrelations with different domination concepts, and its potential for modeling real-world network constraints, we expect $k$-limited domination to become a valuable tool in domination theory and a driving
force for future developments in this area.


\vskip 1pc \noindent{\bf Acknowledgments.} 

D.B., G.R.~and A.T.~acknowledge the financial support of the Slovenian Research and Innovation Agency ARIS (research core funding No.\ P2-0065 [D.B.], P1-0288 [G.R.], P1-0297 and J1-70016 [A.T.], and the bilateral project between Slovenia and Montenegro entitled {\it Modern Domination Concepts and Their Application}, project No. BI-ME/25-27-001). 
\v{Z}.K.V.~acknowledges the financial support of the  Ministry of Education, Science and Innovation of Montenegro through the grant 12/2-633/25-4916 and the bilateral project no. 0604-082/24-1946.

\bibliographystyle{plain}
\bibliography{references}

\end{document}